\long\def\remove#1{}
\newtheorem{theorem}{Theorem}[section] 
\newtheorem{obs}[theorem]{Observation}
\newtheorem{corollary}[theorem]{Corollary}
\newtheorem{definition}[theorem]{Definition}
\newtheorem{proposition}[theorem]{Proposition}
\newenvironment{proof}{{\em Proof:}}{\hfill{\hfill\rule{2mm}{2mm}}}
\newcommand {\mm}[1] {\ifmmode{#1}\else{\mbox{\(#1\)}}\fi}
\newcommand{\img}{\mathrm img}
\newcommand{\supp}{\mathrm supp}
\newcommand{\rank}                {\mm {\rm rank}}
\renewcommand{\L}                        {\mathcal {L}}
\newcommand{\cancel}[1]
\begin{document}

\title{Refinement of Novikov--Betti numbers and of Novikov homology provided by an angle valued map}

\author{
Dan Burghelea  \thanks{
Department of Mathematics,
The Ohio State University, Columbus, OH 43210,USA.
Email: {\tt burghele@math.ohio-state.edu}}
}
\date{}

\maketitle

\hskip 1.5in  dedicated to  the memory of Yuri Petrovich
Solovyov 

\begin{abstract}

To a pair (X,f), X compact ANR and $f:X\to \mathbb S^1$ a continuous  angle valued map, $\kappa$ a field  and  a nonnegative integer $r,$ one assigns  a  finite configuration  of complex numbers $z$   with  multiplicities $\delta^f_r(z)$  
and a  finite configuration of free $\kappa[t^{-1}, t]$--modules  $\hat \delta^f_r$ of  rank $\delta^ f_r(z)$   
indexed by the same numbers $z.$   This is in analogy with the configuration of eigenvalues and of generalized eigen-spaces of a linear operator in a finite dimensional complex vector space. 
The configuration $\delta^f_r$  refines the Novikov--Betti number in dimension $r$   and the configuration $\hat \delta^f_r$ refines  the Novikov homology in dimension $r$ associated with the  cohomology class defined by $f$.

In the case  the field  $\kappa= \mathbb C$ the configuration $\hat \delta^f_r$ provides by ''von-Neumann completion"  a configuration $\hat{\hat \delta}^f_r$ of mutually orthogonal closed  Hilbert submodules of the $L_2$--homology of the infinite cyclic cover of $X$ determined by the map $f,$ which is an $L^\infty(\mathbb S^1)$-Hilbert module.

\end{abstract}

\thispagestyle{empty}
\setcounter{page}{1}

%
\tableofcontents

\vskip .2in

\section {Introduction}

  In \cite {BH} and \cite {Bu} for a pair (X,f), $X$ compact ANR\footnote {cf subsection \ref{SS21} for definition}, $f$ a real  valued map, $\kappa$ a field and $r$ a nonnegative integer we have assigned
 a   configuration  $\delta^f_r$ of complex numbers  $z$  with  multiplicities $\delta^f_r(z)$  and 
 a   configuration of vector spaces  $\hat \delta^f_r(z),$ indexed by the same set $\{z\mid \delta^f_r(z)\geq 1\}=\supp (\delta^f_r),$  with the following properties:
 \begin{enumerate}
\item  $\dim \hat \delta^f_r(z)=  \delta^f_r(z),$  
\item $\sum_{z\in \supp (\delta^f_r)} \delta^f_r(z)= \beta_r(X;\kappa)$, $\beta_r(X;\kappa)= \dim H_r(X;\kappa),$
\item $\oplus _{z\in \supp (\delta^f_r)} \hat \delta^f_r(z) \simeq H_r(X;\kappa).$ \footnote {$\simeq$ denotes isomorphism}
\end{enumerate}
The integers $\beta_r(X;\kappa)$  are referred to as the Betti numbers  of $X$ with coefficients in $\kappa.$ 

Each vector space $\hat\delta^f_r(z)$ appears as a quotient of subspaces of $H_r(X;\kappa)$ well separated in a sense specified later in the paper.
This  assignment was  in analogy with the {\it spectral package}  of a pair $(V,T),$ $V$ a f.d. complex vector space, $T:V\to V$ a linear map, to which  one assigns the configurations 
$\delta^T$ and $\hat \delta^T$ defined by the  eigenvalues of $T$  with their multiplicity and  
the collection of  generalized eigenspaces corresponding to the eigenvalues of $T.$  
It was shown in \cite {BH}  that :
\begin{itemize}
\item {\bf P1}: The  assignment $f\rightsquigarrow \delta^f_r$ is continuous,
\item {\bf P2}: For $M^n$ a closed topological manifold, one has  $\delta^f_r(z)= \delta^f_{n-r}(i \overline z),$ 
\item {\bf P3}: If $X$ is homeomorphic to a simplicial complex or a  Hilbert cube manifold then for an open and dense set of maps $f,$ one has
 $\delta^f_r(z)\leq 1.$
\end{itemize}
When  $\kappa= \mathbb C$ a Hermitian scalar product on $H_r(X;\mathbb C)$ (i.e. a Hilbert space structure on $H_r(X;\mathbb C)$)  provides a canonical realization of the vector spaces $\hat \delta^f_r(z)$ as a collection of mutually orthogonal subspaces $\hat {\hat \delta}^f_r(z)\subseteq H_r(M;\mathbb C)$  such that {\bf P1}, {\bf P2} and {\bf P3} above continue to hold for $\hat{\hat\delta}^f_r.$ 
The configuration $\delta^f_r$ can be reformulated as a {\it characteristic polynomials}  $P_r^f(z),$  a monic polynomial whose zeros are the complex numbers $z$ in the support of $\delta^f_r$ with multiplicity  $\delta^f_r(z),$  of degree equal to the Betti number in dimension $r.$

The complex numbers $z$ in the support of $\delta^f_r$ with multiplicity  $\delta^f_r(z),$ equivalently 
the zeros of $P^f_r(z)$ with their multiplicities, 
can be calculated in case $X$ is a simplicial complex and $f$ a simplicial map by effective algorithms 
and are part of the {\it level persistence invariants} or  {\it bar codes},  exactly those relevant for  the global topology of $X.$ They are of 
interest in data analysis.  
The configurations $\hat{\hat \delta}^f_r,$ when considered for a closed Riemannian manifold, make  a real valued map $f$  a provider  of an orthogonal decomposition (depending on $f$) of  the space of harmonic forms on the Riemannian manifold. In particular, for a generic $f,$  the components have dimension one providing a {\it canonical base} in spaces of harmonic forms.
\vskip .2in 

In  this paper we present a  similar picture  for a pair $(X,f),$  $X$ a compact ANR  and  $f$ an angle valued map $f:X\to \mathbb S^1,$ with similar virtues. In this case an additional homological  data, the cohomology class $\xi^f\in H^1(X; \mathbb Z)$  determined by the map $f,$ is involved.  As expected  the results are similar but  more subtle and more complex.
 
In the case of an angle valued map  one has to replace  the Betti numbers  $\beta_r(X; \kappa)$ by  the Novikov--Betti numbers  $\beta^B(X,\xi;\kappa)$  and  the $\kappa-$vector space $H_r(X;\kappa) $ by the Novikov homology $H^N_r(X,\xi; \kappa[t^{-1},t]). $  In this paper the Novikov homology is a free $\kappa[t^{-1}, t]$-module derived from $H_r(\tilde X; \kappa),$ the homology of the infinite cyclic cover 
$\tilde X$ associated with $\xi$  and $\beta^N_r(X,\xi;\kappa)= \rm{rank}\ H_r^N(X,\xi;\kappa[t^{-1},t])$. One produces analogous configurations $\delta^f_r$ and $\hat \delta^f_r$ which satisfy properties 1., 2., 3. and (analogues of) {\bf P1}, {\bf P2}, {\bf P3} above.
In case of $\kappa= \mathbb C,$ instead of a Hilbert space structure on $H_r(X;\mathbb C)$ one  considers  a Hilbert module structure on the von-Neumann completion of $H^N_r(X,\xi; \kappa[t^{-1},t]),$ see subsection \ref{SS21}, which always exists . This permits to convert the configuration $\hat \delta ^f_r$ into a configuration of closed Hilbert submodules  $\hat{\hat\delta}^f_r$ 
with 
{\bf P1}, {\bf P2} and {\bf P3} continuing to hold as in the case of real valued maps. 
To formulate the results more precisely one needs first to recall for the reader some algebra and algebraic topology concepts and establish some notations.   
\vskip .1in 

Let $\kappa$ be a field and $\kappa[t^{-1}, t]$ be the ring of Laurent polynomials with coefficients in $\kappa.$  This ring is  a commutative 
$\kappa$--algebra, an integral domain and a principal ideal domain. As a consequence, for any f.g. $\kappa[t^{-1}, t]-$module $M,$ the quotient $F(M):= M/ T(M)$ with $T(M)$  the  submodule of torsion elements, is a f.g. free module.  The f.g. module $T(M)$ being torsion is a finite dimensional vector space over $\kappa.$ The only  invariant of $F(M)$ is its rank which, when $Q$ is a field containing $\kappa[t^{-1},t],$ is equal to the dimension of the $Q$--vector space $M\otimes_{\kappa[t^{-1}, t]} Q= F(M)\otimes_{\kappa[t^{-1}, t]} Q.$

For a pair $(X,\xi\in H^1(X;\mathbb Z))$ one considers the  infinite cyclic cover  $\pi:\tilde X\to X$  associated to $\xi,$  and the deck transformation $\tau:\tilde X\to \tilde X,$  the restriction of the free action $\mu: \mathbb Z\times \tilde X\to \tilde X$ associated  to $\xi$
 to  $1\times \tilde X.$
The $\kappa-$vector space $H_r(\tilde X;\kappa)$ equipped with  the isomorphism $t_r:H_r(\tilde X;\kappa)\to H_r(\tilde X;\kappa)$ induced by $\tau$ becomes a $\kappa [t^{-1}, t]$--module.  When $X$ is a compact ANR this module is finitely generated.  For any $Q$ commutative ring which contains $\kappa[t^{-1}, t]$  denote by $H^N_r(X,\xi; Q):= F(H_r(\tilde X,\kappa))\otimes _{\kappa[t^{-1}, t])} Q$ and refer to this free module as {\it Novikov homology } with coefficients in $Q.$ 
When $Q=\kappa[t^{-1}, t]],$ the field of Laurent power series, the Novikov homology is a $\kappa [t^{-1}, t]]-$vector space which is exactly what Novikov has considered in his approach to Morse theory for an angle valued map. In this paper we consider the case $Q= \kappa[t^{-1}, t].$

The rank of $H^N_r(X,\xi; Q)$  is independent of $Q,$ denoted by $b^N_r(X,\xi;\kappa),$ and  referred to as the Novikov--Betti number in dimension $r$ w.r. to $\kappa.$ 

Suppose $\kappa= \mathbb C.$ The ring $\mathbb C[t^{-1}, t]$ can be completed to a finite von Neumann algebra 
$\mathcal N,$ 
see subsection \ref{SS21} or \cite{Lu}.  This algebra is exactly $L^\infty (\mathbb S^1).$  A f.g free $\mathbb C[t^{-1}, t]$--module $M$ equipped with a $\mathbb C[t^{-1}, t]$-inner product can be completed to an $\mathcal N$-Hilbert  module $\overline M$ of finite type, and a collection of split submodules of $M$ to closed Hilbert submodules of $\overline M$;  therefore a collection $N_\alpha$ of quotients of split  submodules of $M$ provides, in a canonical manner, a collection of closed Hilbert submodules $\overline N_\alpha$ of $\overline M$ as described in subsection \ref{SS21}. 
Different $\mathbb C[t^{-1}, t]$-inner products on $M$ provide isometric Hilbert modules completions so one can ignore the inner product in the notation $\overline M.$    

\vskip .1in 
In this paper we start with a pair $(X, \xi),$ $X$ a compact ANR, $\xi \in H^1(X;\mathbb Z)$ and a field $\kappa.$ 
Consider the free $\kappa[t^{-1},t]$--module $ H^N_r(X,\xi;\kappa[t^{-1},t]).$ 
When $\kappa= \mathbb C,$ by using the {\it von Neumann completion} described in subsection \ref{SS21}, a $\mathbb C [t^{-1}, t]$--inner product  on $H_r^N(X,\xi; \mathbb C [t^{-1}, t])$  permits  to pass to the $\mathcal N$-Hilbert module  $\overline {H_r^N(X,\xi; \mathbb C[t^{-1}, t])}$  and to convert the configurations $\hat{\delta}^f_r$  into configurations of  mutually orthogonal Hilbert  submodules of $\overline {H_r^N(X,\xi; \mathbb C [t^{-1}, t])}.$

Note that the $\mathcal N$-Hilbert module $\overline {H_r^N(X,\xi; \mathbb C[t^{-1}, t])}$ is isomorphic to the $\mathcal N$-Hilbert module $H_r^{L_2}(\tilde X)$ known as $L_2$-homology defined in case $X$ is a closed Riemannian manifold  using the Riemannian metric, or in case $X$ is a finite CW-complex using the cell-structure of $X.$  

The main result of the paper is the following theorem. 

\begin{theorem}\label {T1}\
 \begin{enumerate}
 \item To a continuous map $f:X\to \mathbb S^1$ one can alssociate  a monic polynomial $P^f_r(z)$ with non vanishing  roots of  degree equal to $b^N_r(X,\xi^f;\kappa),$  equivalently  a configuration $\delta^f_r$ of non vanishing  complex numbers $z$ with multiplicities $\delta^f_r(z) \geq 1,$   
$$\mbox {Zeros of}\  P_r(z)\ni z \rightsquigarrow \delta^f_r(z)\in  \mathbb Z_{\geq 1},$$ 
which satisfies {\bf P1}, {\bf P2} and {\bf P3}.  

\item One can refine the configuration $\delta^f_r$ to  the assignment 
$$ \mbox {Zeros of} \  P_r(z)\ni z  \rightsquigarrow (L'(z) \subset L(z)) \in \mathcal S (M), \ M= H^N(X,\xi; \kappa[t^{-1},t]) $$
with $\mathcal S(M)$ the set of pairs of free split submodudeles of $M$ with $L'\subset L$ 
such that:
\begin{enumerate}
\item $\bigoplus \hat\delta^f_r(z)$ is isomorphic to $H_r^N(X,\xi; \kappa[t^{-1}, t])$ where $\hat \delta ^f_r (z)= L_r(z)/ \L_r'(z)$ and 
\item $\rank( \hat \delta^f_r(z))= \delta^f_r(z).$ 
\end{enumerate}
\item 
In case $\kappa= \mathbb C$ and a $\mathbb C[t^{-1},t]$-inner product \footnote {see the definition in subsection \ref{SS21}} on $H_r^N(X,\xi; \mathbb C[t^{-1}, t])$ is given, by using  von-Neumann completion,  one can convert the assignment above (the configuration $\hat \delta^f_r$) into a configuration $\hat{\hat \delta}^f_r(z)$ of mutually orthogonal closed Hilbert submodules of the $L^\infty(\mathbb S^1)$-Hilbert module $\overline {H_r^N(X,\xi; \mathbb C[t^{-1}, t])}$ which satisfies {\bf P1} and {\bf P2}.  Up to an isometry of  Hilbert module structures  the configurations $\hat{\hat \delta}^f_r$ are independent of the  inner product.
\end{enumerate}       
\end{theorem}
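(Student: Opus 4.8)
The plan is to transport the real-valued picture of \cite{BH,Bu} to the present setting by passing to the infinite cyclic cover: the configuration $\delta^f_r$ and its refinement are extracted from the level persistence of the $\tau$-equivariant real-valued lift $\tilde f:\tilde X\to\reals$, and the interaction with the Novikov module is governed by the Milnor--Wang exact sequence of $\tilde X$. For a \emph{tame} $f$ (finitely many critical angles $\theta_1,\dots,\theta_m\in\mathbb S^1$ with interleaved regular angles $s_1,\dots,s_m$) this level persistence is encoded in the representation $\mathbb H_r(f)$ of the cyclic quiver $\tilde A_{2m-1}$ assembled from the homologies $H_r(f^{-1}(s_i);\kappa)$ of the regular fibres and $H_r(f^{-1}([s_{i-1},s_i]);\kappa)$ of the elementary arcs with the inclusion-induced maps; over $\bar\kappa$ this decomposes into string modules, supported on proper sub-arcs, and band modules $B(\lambda,k)$, $\lambda\in\bar\kappa^\times$. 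Equivalently — and this is the form I would actually use — the same data is the $\mathbb Z$-equivariant level persistence of $\tilde f$: the sublevel sets $\tilde f^{-1}((-\infty,t])$, the level sets, and the maps between them, with $\tau$ acting by the period shift, so that the bar codes and the ``well-separated'' subquotients of $H_r(\tilde X;\kappa)$ attached to them come in $\mathbb Z$-orbits and organize into $\kappa[t^{-1},t]$-modules. From this one reads off a configuration $\delta^f_r$ of non-vanishing complex numbers, encoding the relevant bar/cell data by complex numbers in the manner generalizing \cite{BH}, and sets $P^f_r(z)=\prod_{w\in\supp\delta^f_r}(z-w)^{\delta^f_r(w)}$. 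For arbitrary continuous $f$ the invariants are obtained from the tame case by the standard stability/interleaving argument, which simultaneously yields {\bf P1}.

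The substantive point in part (1) is that $\deg P^f_r=b^N_r(X,\xi^f;\kappa)$, i.e.\ that this bar/cell data accounts for precisely the free module $M:=F(H_r(\tilde X;\kappa))=H^N_r(X,\xi^f;\kappa[t^{-1},t])$, and this is where I expect the main work. Cutting $X$ along a regular fibre $Y$ gives $(Z;Y_-,Y_+)$ with $X=Z/(Y_-\!\sim\! Y_+)$ and the Milnor--Wang sequence
$$\cdots\to H_r(Y;\kappa)\otimes R \xrightarrow{\ (i_-)_\ast-t\,(i_+)_\ast\ } H_r(Z;\kappa)\otimes R \to H_r(\tilde X;\kappa)\to H_{r-1}(Y;\kappa)\otimes R\to\cdots,\qquad R=\kappa[t^{-1},t],$$
exhibiting $H_r(\tilde X;\kappa)$ as an extension of $\ker$ of one matrix pencil $A-tB$ over $R$ by the cokernel of another. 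By the Kronecker normal form of matrix pencils (equivalently the indecomposable decomposition of the Kronecker-quiver representation $A-tB$) such a pencil splits into $L_\epsilon$- and $L_\eta^{\mathsf T}$-blocks, which contribute free summands to kernel and cokernel, and regular blocks $J_k(\mu)$, which contribute torsion $R/(t-\mu)^k$ when $\mu\neq 0,\infty$ but — $t$ being a unit of $R$ — contribute nothing when $\mu=0$ or $\mu=\infty$. Matching the free ranks so obtained with the bookkeeping of $\mathbb H_r(f)$ and $\mathbb H_{r-1}(f)$, and in particular pinning down exactly which part of the level persistence of $\tilde f$ survives into $M$, is the technical heart of the argument. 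With the identity in hand, {\bf P2} follows from Poincaré--Lefschetz duality on the non-compact manifold $\tilde M$ (compatible with the $R$-structure), which interchanges sublevel and superlevel tubes of $\tilde f$ and reverses the cyclic order, mirroring the real-valued duality of \cite{BH}; and {\bf P3} follows from the transversality/perturbation argument of \cite{BH}, perturbing $f$ so as to split coinciding data.

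For part (2) I would produce $L'_r(z)\subset L_r(z)$ exactly as in the real-valued case: for each $z\in\supp\delta^f_r$ a pair of well-separated $\tau$-equivariant sub-tubes $\tilde f^{-1}(I')\subset\tilde f^{-1}(I)$ of $\tilde X$ whose images in $H_r(\tilde X;\kappa)$, after projection to the free quotient $M$, are split $R$-submodules $L'_r(z)\subseteq L_r(z)$ with $L_r(z)/L'_r(z)$ free of rank $\delta^f_r(z)$; this gives (2b). For (2a), the indecomposable decomposition of the level persistence passes, after one discards the summands that land in the $R$-torsion of $H_r(\tilde X;\kappa)$ and hence map to $0$ in $M$, to an internal direct sum $\bigoplus_z \hat\delta^f_r(z)\cong M$ with $\hat\delta^f_r(z)=L_r(z)/L'_r(z)$.

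Finally, for part (3) with $\kappa=\mathbb C$ and a fixed $\mathbb C[t^{-1},t]$-inner product, the von Neumann completion of subsection \ref{SS21} sends a split submodule $L\subseteq M$ to a closed Hilbert submodule $\overline L\subseteq\overline M$ and a quotient $L/L'$ of split submodules to $\overline L\ominus\overline{L'}$; I set $\hat{\hat\delta}^f_r(z):=\overline{L_r(z)}\ominus\overline{L'_r(z)}$. The remaining obstacle is to show that the $\hat{\hat\delta}^f_r(z)$ are mutually orthogonal and that $\overline M$ is their orthogonal direct sum: for this one checks that the family $\{L_r(z),L'_r(z)\}_z$ is a compatible (distributive) family of split submodules — which is precisely the ``well-separatedness'' built in part (2), read off from the sublevel filtration of $\tilde f$ — and then the $\ominus$-construction recalled in subsection \ref{SS21} turns the algebraic splitting $M\cong\bigoplus_z\hat\delta^f_r(z)$ into an orthogonal one. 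That $\hat{\hat\delta}^f_r$ still satisfies {\bf P1} and {\bf P2} is inherited from parts (1)--(2), the completion being natural and Poincaré--Lefschetz duality being arrangeable as an isometry; and independence of the inner product up to isometry is immediate from subsection \ref{SS21}, since $L_r(z),L'_r(z)$ and the quotient construction are defined without reference to the inner product while any two $\mathbb C[t^{-1},t]$-inner products on $M$ yield canonically isometric Hilbert-module completions.
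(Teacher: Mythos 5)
Your route is genuinely different from the paper's: you propose to get the invariants from tame maps via the cyclic-quiver (string/band-module) picture of \cite{BH,BD11}, to compute $H_r(\tilde X;\kappa)$ as a $\kappa[t^{-1},t]$-module through the Milnor--Wang sequence and Kronecker pencils, and then to match the two bookkeepings. The paper does none of this: it works directly with an arbitrary continuous $f$ on a compact ANR, introducing the subspaces $\mathbb I_a(r),\ \mathbb I^b(r),\ \mathbb F_r(a,b)=\mathbb I_a(r)\cap\mathbb I^b(r)$ of $H_r(\tilde X)$, the box quotients $\mathbb F_r(B)$ with their short exact sequences (Observation 3.5), and the limits $\hat\delta^{\tilde f}_r(a,b)$. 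The decisive facts are Proposition 3.2 (finite dimensionality of $\mathbb F_r(a,b)$ and, crucially, $\mathbb I_{-\infty}(r)=\mathbb I^{\infty}(r)=T(H_r(\tilde X))$) and Proposition 3.7(3), where $\pi_r\circ I_r$ is shown to be an isomorphism onto $H_r(\tilde X)/T(H_r(\tilde X))$ by taking the direct limit of the box isomorphisms $I^{B_k}_r$; Corollary 3.8 then upgrades everything to $\kappa[t^{-1},t]$-linearity via compatible splittings, producing $L(z)=\mathbb F_r(\langle a,b\rangle)\supset L'(z)=\mathbb F'_r(\langle a,b\rangle)$ as split submodules of the free quotient. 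This direct construction avoids both tameness and any appeal to stability to pass to continuous maps.

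The genuine gap in your proposal is that the one step you yourself identify as ``the technical heart'' --- showing that exactly the free part $M=H^N_r(X,\xi^f;\kappa[t^{-1},t])$ is accounted for, i.e.\ $\deg P^f_r=b^N_r$ and the internal direct sum $\bigoplus_z\hat\delta^f_r(z)\simeq M$ of (2a) --- is never carried out; matching Kronecker-block ranks of two pencils (in degrees $r$ and $r-1$) against the band/string data is precisely what is nontrivial, and the paper replaces it by the torsion identification $\mathbb I_{\pm\infty}=T(H_r(\tilde X))$ together with the limit argument above. Moreover your reduction from tame to continuous maps leans on stability ({\bf P1}), which in this paper is not proved at all but deferred to \cite{Bu2} (as are {\bf P2} and {\bf P3}), so it cannot be invoked as ``standard'' here without importing exactly the deferred material. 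Finally, in part (3) mutual orthogonality of $\overline{L_r(z)}\ominus\overline{L'_r(z)}$ for distinct $z$ does not follow merely from the algebraic splitting being well separated (a non-orthogonal direct sum stays non-orthogonal after completion); the paper's construction also leaves this to the deferred details, but your claim that the $\ominus$-construction ``turns the algebraic splitting into an orthogonal one'' is, as stated, unjustified and would need the specific nested structure of the $\mathbb F_r$-subspaces (or an explicit orthogonalization) to be made correct.
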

We view the configuration $\delta^f_r$ as a refinement of the Novikov-Betti number $b_r^N(X,\xi; \kappa)$   and the configuration $\hat\delta^f_r$ as a refinement of the Novikov homology $H_r^N(X,\xi; \mathbb C[t^{-1}, t])$ or better said as an additional structure on the Novikov homology. 
In this paper we give only the construction of the configurations $\delta^f_r, \hat\delta^f_r$ and $\hat{\hat \delta}^f_r;$
The details  of the proof that the configurations $\hat{\hat \delta}^f_r$  satisfy {\bf P1}, {\bf P2} and {\bf P3} will be presented in a paper  in preparation \cite {Bu2}. The configuration $\delta^f_r$  was introduced and studied in \cite{BH}, 
however the configurations $\hat\delta ^f_r$ and $\hat{\hat \delta}^f_r$ have not been considered before.
 \vskip .1in

We note that the configuration $\delta^f_r,$ or equivalently the polynomial $P_r^f(z),$ i.e. the roots with their multiplicities can be explicitly calculated in case $X$ is a finite simplicial complex and $f$ a simplicial map.
Precisely one can produce algorithms with input the simplicial complex and the values of $f$ on vertices and output  the zeros of the polynomial $P_r^f(z)$ as a pair of real number (the real and the imaginary part) with their multiplicities. 
Such algorithm  is presented in \cite {BD11} where the zeros of $P^f_r(z)$ appear as "closed $r$-bar codes" and "open $(r-1)$-bar codes" 
\footnote {The zero $z= \rho e^{i\theta} \in \mathbb C\setminus 0$  represents a closed bar code when $\rho \geq 1$ and an open bar code when $\rho <1$ }.This algorithm  uses a different  definition of the configuration $\delta^f_r$ based on bar codes in "level persistence"
cf \cite {CSD09} or \cite {BD11} . 

We also note that for $\kappa= \mathbb C$ and for $X$ an $n$--dimensional  closed Riemannian manifold the space of $L_2$-harmonic differential forms of degree  $n-r$ on the complete Riemannian manifold $\tilde X$ identifies to the $L_2$-homology in dimension $r$ of $\tilde X$ via the Hodge theory on Riemannian manifolds. 
The configuration $\hat{\hat \delta}^f_r$ provides in this case a decomposition of the Hilbert module of  harmonic forms which depends continuously on $f.$ For a generic set of continuous functions $f,$ the Hilbert submodules  $\hat {\hat \delta}^f_r(z)$ have the von Neumann dimension equal to one.

To explain the construction of the configurations $\delta^f_r, \hat \delta ^f_r$ and $\hat{\hat\delta}^f_r$ some preliminaries presented in section 2.
are necessary. In section 3 one provides the definition of the configurations and a number of intermediate results and properties while in section 3 one indicates the way one verifies the statements in Theorem 1.1.  
\vskip .1in
The Author is thankful to the referee for pointing out a number of errors  and notational inconsistencies in a previous version of this paper.

\section {Preparatory material}

\subsection{Completion}\label{SS21}

Let $\mathbb C[t^{-1}, t]$ be the ring of Laurent polynomials, equivalently the group ring $\mathbb C [\mathbb Z]$ of the infinite cyclic group. This is an algebra with involution $\ast$ and trace $tr.$ If $a= \sum_{n\in \mathbb Z}  a_n t^n$ then:
\begin{equation*}
\begin{aligned}
 \ast (a):= a^\ast =& \sum_{n\in \mathbb Z} \overline a_n t^{-n} \\ tr (a)=&  a_0.
\end{aligned}
\end{equation*}
with $\overline a$ the complex conjugate of the complex number $a.$

The algebra $\mathbb C[\mathbb Z]$ can be considered as a sub algebra of the algebra of bounded linear operators on the separable Hilbert space $l_2(\mathbb Z),$ of square summable sequences $\{a_n, n\in \mathbb Z \mid \sum_{n\in \mathbb Z}  |a_n|^2 <\infty\} .$  The linear operator  defined by a Laurent polynomial is given by the multiplication of sequences in $l_2(\mathbb Z)$  with the Laurent polynomial regarded as a sequence with all but finitely many components equal to zero. 
One denotes by $\mathcal N$ the weak closure  of $\mathbb C[\mathbb Z]$ which is  a finite von Neumann algebra, with involution and trace extending the ones defined above, cf \cite{Lu}. 

This algebra $\mathcal N$ is referred to below as the von--Neumann completion of the group ring $\mathbb C(\mathbb Z)$ and is isomorphic to the familiar $L^\infty(\mathbb S^1)$ via Fourier series transform (which assigns to a complex valued function defined on $\mathbb S^1$ its Fourier series).   

Given a free $\mathbb C[t^{-1}, t]$--module  $M$ a {\it $\mathbb C[t^{-1}, t]$-valued  inner product}  is 
a map $\mu: M\times M\to \mathbb C^[t^{-1},t]$ which is: 
\begin{enumerate}
\item $\mathbb C[t^{-1},t]-$linear in the first variable,
\item {\it symmetric} in the sense that $\mu(x,y)= \mu(y,x)^\ast,$  $x,y\in M,$
\item {\it positive definite}  in the sense that satisfies 
\begin{enumerate}
\item $\mu(x,x)\in \mathbb C[t^{-1},t]_+$ with $\mathbb C [t^{-1},t]_+$ the set of elements of the form $a a^\ast$ and  
\item $\mu(x,x)=0$ iff $x=0,$ 
\end{enumerate}
and satisfies 
\item the map $M\to {\it Hom}_{\mathbb C^[t^{-1},t]} (M,\mathbb C^[t^{-1},t])$ defined by $\mu(y)(x)= \mu(x,y)$ is one to one. 
\end{enumerate}

Clearly $\mathbb C[t^{-1}, t]$-valued  inner products exist. Indeed, if $e^1, e^2, \cdots e^k$ is a base of $M$ then $$\mu (\sum a_ie^i, \sum b_j e^j):= \sum a_i (b_i)^\ast$$ provides  such inner product.

By completing the  $\mathbb C-$vector space $M$  w.r. to the  hermitian inner product $<x,y>:= tr(\mu(x,y))$ one obtains a Hilbert space $\overline M$ which is an $\mathcal N$-Hilbert module  cf \cite {Lu} isomeric to $l_2(\mathbb Z)^{\oplus k},$ $k$ the rank of $M.$
Two different $\mathbb C[t^{-1}, t]$-valued  inner products  $\mu_1$ and $\mu_2$ lead to the isometric Hilbert modules $\overline M_{\mu_1}$  and
$\overline M_{\mu_2}.$
This justifies dropping $\mu$  from notation. If one identifies $\mathcal N$ to $L^\infty (\mathbb S^1)$ and 
$l_2(\mathbb Z)^{\oplus k}$ to $L^2(\mathbb S^1)^{\oplus k}$  (by interpreting the sequence $\sum_{n\in \mathbb Z} a_n t^n$ as the complex valued function $\sum_{n\in \mathbb Z} a_n e^{i n\theta}$)  the $\mathcal N-$ module structure on $l_2(\mathbb Z)^{\oplus k}$  becomes the $L^\infty(\mathbb S^1)$-module structure on $(L^2(\mathbb S^1))^{\oplus k}$ and is given by the component-wise multiplication of  a $k-$tuple of $L^2$-functions, element in 
$(L^2(\mathbb S^1))^{\oplus k},$ by the $L^\infty$-function in $L^\infty (\mathbb S^1).$

If one has $N\subset M$ a free split submodule of the f.g free $\mathbb C[t^{-1},t]$-module $M$ and $\mu$ is an $\mathbb C[t^{-1}, t]$-valued inner product on $M$ then $\overline N_{\mu}$ is a closed Hilbert submodule of $\overline M_{\mu}.$ Moreover if $N'_i\subseteq N_i \subseteq M,$ $i= 1,2,\cdots $ is a collection of split submodules and $N_i/N'_i$ is a collection of free modules, quotient of submodules of $M,$ then 
one can canonically convert $N_i/ N'_i$  into closed Hilbert submodules of $\overline M$ simply by taking the closure of the kernel of the  projection $N_i\to N_i/ N_i'$ inside $N_i.$ The process of  passing from ($\mathbb C[t^{-1},t], M$) to  ($\mathcal N, \overline M$) is referred to below as {\it von Neumann completion} and was pioneered in \cite{Lu} for any group ring $\mathbb C[\Gamma] $ and f.g. projective $\mathbb C[\Gamma]$-module.    

\subsection{Configurations and the  collision topology on the space of configurations}

Let $X$ be a topological space and $N$ a positive integer.  
Denote by 
$$\mathcal C_N(X):=\{\delta: X\to \mathbb Z_{\geq 0} \mid \sum_{x\in X} \delta (x)= N\}$$
 the set of finite configurations of total cardinality $N.$  This set identifies to the space $X^N/\Sigma_N$ the 
 quotient space of the cartesian $N-$fold product of $X$ by the action of the permutation group $\Sigma _N$ of $N-$objects.
 
Let $V$ be a free f.g.module over the commutative unital ring $R$ or a finite type Hilbert module over a finite von Neumann algebra  $\mathcal N$  and let $\mathcal P(V)$ be the set of free split submodules  in the first case or of closed Hilbert submodules in the second. One generalizes  the set of configurations 
$\mathcal C_N(X)$ to the set $\mathcal C_V(X).$ The set $\mathcal C_V(X)$ consistis of maps $\hat {\hat \delta}: X\to \mathcal P(V)$ which satisfy
\begin{enumerate}
\item  $\supp (\hat{\hat \delta})= \{x\in X\mid \hat{\hat \delta}(x)\ne 0\}$ is finite 
\item   if $i(x): \hat{\hat \delta}(x)\to V$ denotes the inclusion  of  $\hat{\hat \delta}(x)$ in $V$ then the map $I$ 
$$I: = \sum_{x\in \supp (\hat{\hat \delta})} i(x): \bigoplus_{x\in \supp (\hat{\hat \delta})}  \hat{\hat \delta}(x)\to V$$ 
\end{enumerate}
is an isomorphism.
Denote by $$e: \mathcal C_V(X)\to \mathcal C_{\dim V}(X)$$ the map defined by $e(\hat{\hat \delta})(x):= \dim \hat{\hat\delta} (x).$

The set $\mathcal C_N(X)$ and the set ${\bf \mathcal C}_V (X)$ when $R= \mathbb C$ or when $V$ is an $\mathcal N-$ Hilbert module carry  natural topologies, referred to as the {\it collision topology}, which make
$e$ continuous.
One  way to describe these topologies is to specify for each $\delta$ or $\hat{\hat\delta}$  a system of {\it fundamental neighborhoods}.

If $\delta$ has as support  the set of points $\{x_1, x_2, \cdots ,x_k\},$ a fundamental neighborhood  $\mathcal U$ of $\delta$ is specified by a collection of $k$ disjoint open neighborhoods  $U_1, U_2,\cdots U_k$ of $x_1,\cdots  x_k,$ and consists of $\{\delta'\in \mathcal C_N(X)\mid
\sum_{x\in U_i} \delta'(x)=\delta(x_i)\}.$ 
Similarly, if $\hat{\hat \delta}$ has as support  the set of points $\{x_1, x_2, \cdots  x_k\}$ with $\hat{\hat\delta}(x_i)= V_i\subseteq V\},$ a fundamental neighborhood $\mathcal U$  of $\hat{\hat \delta}$ is specified by a collection of  disjoint open neighborhoods  $U_1, U_2,\cdots U_k$ of $x_1,\cdots  x_k,$ and open neighborhoods $O_1, O_2, \cdots, O_k$ of $V_1, V_2, \cdots, V_k$ in $G_{\dim V_i} (V)$ 
and  consists of 
$$\{\hat{\hat \delta}'\in \mathcal C_V(X) \mid  \sum_ {x\in U_i} \hat{\hat \delta}' (x) \in O_i\}.$$
Here $G_k(V)$ denotes the Grassmanian of $k-$dimensional subspaces  of $V$  \footnote {When $V$ is an $\mathcal N$--Hilbert module $G_k(V)$ can be identified to the set of $\mathcal N$--linear  self adjoint projectors whose von Neumann trace is equal to $k$ which inherits the topology induced by the norm of bounded operators in the Hilbert space $V.$ }.

Clearly $e$ is continuous and surjective  with fiber above $\delta,$ the subset of  $G_{n_1}(V)\times G_{n_2}(V) \cdots \times G_{n_k}(V)$ consisting  of $(V'_1, V'_2,\cdots V'_k), V'_i\in G_{n_i}(V)$ 
where $n_i= \dim V_i.$  

Note that: 
\begin{enumerate}
\item $\mathcal C_N(X)= X^N/\Sigma_N$ is the $N$--fold symmetric  product of $X$ and if $X$ is  a metric space with distance $D$ then
the collision topology is  the topology defined by  the induced distance $\underline D$ on  $X^N/\Sigma_N.$
\item If $X= \mathbb R^2= \mathbb C$ then $\mathcal C_N(X)$ identifies to the set of monic polynomials with complex coefficients. To the configuration $\delta$ whose support consists of the points $z_1, z_2, \cdots z_k$ with  $\delta(z_i)= n_i$ one associates the monic polynomial  $P^f(z)= \prod _i (z- z_i)^{n_i}. $ Then $\mathcal C_N(X)$ identifies to $\mathbb C^N$ as metric spaces. 
\item Similarly, if $X= \mathbb C^\ast=  \mathbb C\setminus 0$ then $\mathcal C_N(X)$ identifies to the set of  monic polynomials of degree $N$ with non vanishing  free coefficient, hence with $\mathbb C^{N-1}\times \mathbb C^\ast$ where $\mathbb C^\ast= \mathbb C\setminus 0.$
\end{enumerate}

In this paper we will consider as intermediate step a slightly more general type of configurations involving  {\it quotients of  free split submodules} of a free $R-$module or {\it quotients  of closed Hilbert submodules}; actually only the case $R= \kappa[t^{-1},t]$  will be involved.

Let $M$ be a f.g. free $\kappa[t^{-1},t]$--module. Denote by $\tilde{\mathcal S}(M)$ the set of pairs $(L\supset L') $  each  pair with $L, L'$  split submodules of $M.$ Since $M$ is f.g. and free so are $L$ and $L'$ and $L/L'.$

A finite collection of pairs $(L_r\supset L'_r)\in \tilde {\mathcal S}(M), \  r=1,2,\cdots k,$   is called {\it well separated } if for any right inverses  $i_r: L_r/L'_r \to L_r\subseteq M$ of the projections $L_r\to L_r/L'_r$ the sum of linear map $$\sum_{1\leq r
\leq k} i_r : \bigoplus L_r/ L'_r \to M$$ is injective.

For a map $\hat \delta: X\to \tilde {\mathcal S} (M)$ denote by $\supp (\hat\delta)$ the set  $$\supp (\hat\delta):= \{x\in X \mid \hat\delta(x)= (L(x), L'(x)), \ L(x)\ne 0\}.$$
A finite configuration of quotient of  split submodules  of $M$ is a map  $\hat\delta: X \to \tilde {\mathcal S} (M)$ which satisfies:
\begin{enumerate}
\item $\supp (\hat \delta)$  is finite,
\item The collection of pairs $(L(x) \supset L'(x))$  is well separated. 
\item 
For any right inverses $i(x)'$s the linear map 
 $$\sum_{x\in \supp (\hat \delta)} i(x)  :  \bigoplus _{x\in \supp (\hat \delta)}L(x)/L'(x) \to M$$ is an isomorphism. 
\end{enumerate} 
When $R= \mathbb R$ or $\mathbb C,$ in the presence of an scalar product  (Hilbert space structure) on $V,$ one can canonically pass from  a map as above $\hat \delta$ to a map $\hat{\hat \delta}$ by replacing the pair $L(x)\supset L'(x)$ by the orthogonal complement of $L'(x)$ in $L(x)$. This is also the  case when $V$ is a $\mathcal N-$Hilbert module. 

In view of the subsection \ref{SS21}, when $\kappa= \mathbb C,$   the choice of a $\mathbb C[t^{-1},t]$-valued  inner product on $M$ provides a hermitian inner product in $M$ as explained in the previous subsection and the von Newman completion converts  any configuration $\hat \delta$  into a configuration $\hat{\hat \delta} $ of closed Hilbert submodules of $\overline M.$

\subsection {Preliminary on compact ANR's and tame maps} \label {SS23}\

{\it Tame maps:}  For a continuous map $f:X\to Y$ between two topological Hausdorff spaces a {\it regular value} is a point $y\in Y$ for which there exists a neighborhood $U$ of $y$ s.t. for any $y'\in U$ the inclusion $f^{-1}(y')\subset f^{-1}(U)$ is a homotopy equivalence. The values $y$ which are not regular are called {\it critical} and a map is {\it tame} if the set of critical values $Cr(f)\subset Y$ is discrete.  In case $Y$ is a metric space with distance $d,$ in particular $Y= \mathbb R$ or $\mathbb S^1,$ for a  map $f$ one can introduce $\epsilon(f): = \inf_{y_1,y_2\in Cr(f), y_1\ne y_2)}  d(y_1, y_2).$  
If  $Y= \mathbb R$ or $\mathbb S^1,$ $X$ is compact and $f$ is tame then $\epsilon(f)>0$ and if $f$ is not tame then $\epsilon (f)=0.$

{\it ANR's and Hilbert cube manifolds}: One  denotes by $[0,1]^\infty$ the countable product of the compact interval $[0,1]$ and call it the {\it Hilbert cube}.  A second countable  Hausdorff space is a Hilbert cube manifold if  is locally homeomorphic to $[0,1]^\infty.$  In view of fundamental results on the topology of Hilbert cube manifolds due to Edwards, Chapman, West etc, cf  \cite {CH},   
{\it   a locally compact space $X$ is an ANR iff the product with $[0,1]^\infty$ is a Hilbert cube manifold.}  

 We are concerned in this paper with compact ANRs.  A space $X$ is a compact ANR iff stably homeomorphic to a finite simplicial complex, i.e. iff there exists a simplicial complex $K$ such that 
$X\times [0,1]^\infty$ is homeomorphic to $K\times [0,1]^\infty.$  Recall also that two compact Hilbert cube manifolds are homeomorphic iff they are homotopy equivalent cf \cite{CH} however not any homotopy equivalence is homotopic to a homeomorphism \footnote {but only the simple homotopy equivalences cf \cite {CH}}.
The p.l. maps from a simplicial complex $K$ into $\mathbb R$ or $\mathbb S^1$ are dense in  the space of continuous maps with  compact open topology.  Any  p.l map is tame if $K$ is finite. For a compact Hilbert cube manifold the tame maps are also dense in the space of continuous maps (with compact open topology).  

We will  use these results on compact Hilbert cube manifolds  to establish results about general compact ANRs  by verifying first their validity for finite simplicial complexes.

\section {The configurations $\delta^f_r$,  $\hat \delta^f_r$ and $\hat{\hat {\delta}}^f_r.$}.

Let $X$ be a compact ANR and $f:X\to \mathbb S^1$ be a continuous  map. An infinite cyclic cover of $f$ is provided by the commutative diagram
\begin{equation}
\xymatrix{ &\mathbb R \ar[r]^p & \mathbb S^1\\
&\tilde X\ar[u]^{\tilde f}\ar[r]^\pi &X\ar[u]^f }
\end{equation}
and the free action $ \mu:\mathbb Z\times \tilde X\to\tilde X$ with $\pi(\mu(n,x))= \pi(x)$ inducing an homeomorphism $\pi: \tilde X/\mathbb Z\to X$ and $\tilde f(\mu(n,x))= \tilde f(x)+2\pi n.$ 
Denote by $\tau: \tilde X\to \tilde X$  the restriction of $\mu$ to $1\times \tilde X.$
\vskip .1in
Fix $\kappa$ a field. To ease the writing for a space $Y$ we abbreviate $H_r(Y;\kappa)$ by $H_r(Y).$
The homeomorphism $\tau$ induces the isomorphism $t_r: H_r(\tilde X)\to H_r(\tilde X)$  which defines a structure of 
$\kappa[t^{-1},t]$-module on the $\kappa-$vector space $H_r(\tilde X).$ The isomorphism $t_r$ represents the multiplication by $t\in \kappa[t^{-1},t].$ Note that $ H_r(\tilde X)$ is a f.g.  $\kappa[t^{-1}, t]$-module  and that $\kappa[t^{-1},t]$ is a principal ideal domain therefore the torsion submodule $T(H_r(\tilde X))$ is a finite dimensional $\kappa-$vector space,  $H^N_r(X,\xi):= H_r(\tilde X)/T( H_r(\tilde X)) $ is a f.g. free $\kappa[t^{-1},t]-$module and $H_r(\tilde X)$ is isomorphic to $H^N_r(X,\xi)\oplus T(H_r(\tilde X).$
\vskip .1in

Denote by $\tilde X_a= \tilde f^{-1}((-\infty,a])$, $\tilde X^b= \tilde f^{-1}([b,\infty)). $
Following \cite {Bu} one introduces the following notions:

\begin{itemize}
\item $\mathbb I_a(r) = \img(H_r(\tilde X_a)\to H_r(\tilde X)),$  $\mathbb I^b(r)= \img(H_r(\tilde X^b)\to H_r(\tilde X)),$
\item $\mathbb I_{-\infty}(r): =\cap_{a\in \mathbb R} \mathbb I_a(r),$ \quad 
$\mathbb I^{\infty}(r): =\cap_{b\in \mathbb R} \mathbb I^b(r),$ 
\item $\mathbb F_r(a,b)= \mathbb I_a(r) \cap \mathbb I^b(r),$ 
\item $\mathbb F_r(-\infty,b)= \mathbb I_{-\infty}(r) \cap \mathbb I^b(r),$  
\quad $\mathbb F_r(a,\infty)= \mathbb I_a(r) \cap \mathbb I^\infty (r).$
\end{itemize}

With this notation one observes  as in \cite{Bu}  that: 
\begin{obs}\label{O31}\ 
\begin{enumerate}
\item $t_r: \mathbb I_a(r)\to \mathbb I_{a+2\pi}(r)$ and $t_r:  \mathbb I^b(r)\to \mathbb I^{b+2\pi}(r)$ are isomorphisms 
and therefore: 
$t_r:\mathbb F_r(a,b) \to \mathbb F_r(a+2\pi, b+2\pi) $ is an isomorphism. 
Then 
both $\mathbb I_{-\infty}(r)$ and $\mathbb I^\infty(r)$ are $\kappa[t^{-1},t]$-submodules.  
\item For $a'\leq a$ and $b\leq b'$ one has:

$\mathbb F_r(a',b')\subseteq \mathbb F_r( a,b),$ 

$\mathbb F_r(-\infty ,b')\subseteq \mathbb F_r( a,b)$ and
$\mathbb F_r(a',\infty)\subseteq \mathbb F_r( a,b).$
\item $\cup_{a\in \mathbb R} \mathbb I_a(r)= \cup_{b\in \mathbb R}\mathbb I^b(r)= H_r(\tilde X).$
\end{enumerate}
\end{obs}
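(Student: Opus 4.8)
The plan is to reduce all three items to two elementary inputs: that the deck transformation $\tau$ intertwines the sublevel filtration $\{\tilde X_a\}$ with its $2\pi$-translate (and similarly $\{\tilde X^b\}$ with its translate), and that singular cycles have compact support so that the filtration exhausts $H_r(\tilde X)$.

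For item (1), I would start from the defining relation $\tilde f\circ\tau = \tilde f + 2\pi$, which immediately gives $\tau(\tilde X_a) = \tilde X_{a+2\pi}$ and $\tau(\tilde X^b) = \tilde X^{b+2\pi}$; since $\tau$ is a homeomorphism of $\tilde X$, these are homeomorphisms between the corresponding sub/superlevel sets, and functoriality of $H_r$ produces commuting squares relating the inclusions $\tilde X_a\hookrightarrow\tilde X$ and $\tilde X_{a+2\pi}\hookrightarrow\tilde X$. From such a square one reads off that $t_r = \tau_*$ carries $\mathbb I_a(r)$ onto $\mathbb I_{a+2\pi}(r)$; injectivity is free because $t_r$ is already an automorphism of all of $H_r(\tilde X)$, so the restriction is an isomorphism, and likewise for $\mathbb I^b(r)$. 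Since $t_r$ is a bijective linear map it commutes with intersections, so $t_r(\mathbb F_r(a,b)) = t_r(\mathbb I_a(r))\cap t_r(\mathbb I^b(r)) = \mathbb F_r(a+2\pi,b+2\pi)$. For the last clause I would again use that a bijection commutes with arbitrary intersections: $t_r(\mathbb I_{-\infty}(r)) = \bigcap_a t_r(\mathbb I_a(r)) = \bigcap_a \mathbb I_{a+2\pi}(r) = \mathbb I_{-\infty}(r)$, so $\mathbb I_{-\infty}(r)$ (visibly a $\kappa$-subspace) is stable under $t_r$ and $t_r^{-1}$, hence a $\kappa[t^{-1},t]$-submodule; the argument for $\mathbb I^\infty(r)$ is symmetric.

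For item (2), I would use monotonicity of the two filtrations: $a'\le a$ gives $\tilde X_{a'}\subseteq\tilde X_a$, so the map $H_r(\tilde X_{a'})\to H_r(\tilde X)$ factors through $H_r(\tilde X_a)$ and therefore $\mathbb I_{a'}(r)\subseteq\mathbb I_a(r)$; dually $b\le b'$ gives $\tilde X^{b'}\subseteq\tilde X^b$ and $\mathbb I^{b'}(r)\subseteq\mathbb I^b(r)$. Intersecting these yields $\mathbb F_r(a',b')\subseteq\mathbb F_r(a,b)$, and since $\mathbb I_{-\infty}(r)\subseteq\mathbb I_a(r)$ and $\mathbb I^\infty(r)\subseteq\mathbb I^b(r)$ for every $a,b$, the two boundary cases $\mathbb F_r(-\infty,b')\subseteq\mathbb F_r(a,b)$ and $\mathbb F_r(a',\infty)\subseteq\mathbb F_r(a,b)$ follow by the same intersection. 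For item (3), I would note that $\{\tilde X_a\}$ increases to $\tilde X$ (each $x$ lies in $\tilde X_a$ once $a\ge\tilde f(x)$), and that a singular $r$-cycle in $\tilde X$, being a finite chain, has compact support on which $\tilde f$ attains a maximum $a_0$; hence the cycle already lives in $\tilde X_{a_0}$ and its class lies in $\mathbb I_{a_0}(r)$, giving $\bigcup_a\mathbb I_a(r) = H_r(\tilde X)$. The identity $\bigcup_b\mathbb I^b(r) = H_r(\tilde X)$ is proved verbatim with the minimum of $\tilde f$ on the support in place of the maximum.

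I do not expect a genuine obstacle here. The only points that deserve a moment's care are, in item (1), verifying that $\tau$ really does restrict to a homeomorphism $\tilde X_a\to\tilde X_{a+2\pi}$ (and $\tilde X^b\to\tilde X^{b+2\pi}$) so that the naturality squares are available, together with the routine remark that a bijective linear map commutes with arbitrary intersections — this is exactly what promotes $\mathbb I_{-\infty}(r)$ and $\mathbb I^\infty(r)$ from $\kappa$-subspaces to $\kappa[t^{-1},t]$-submodules — and, in item (3), the appeal to compactness of the support of a singular chain, which is the single topological fact used beyond the existence of $\tau$.
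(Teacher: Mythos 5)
Your proof is correct, and it fills in exactly the routine verification the paper leaves implicit (Observation 3.1 is stated without proof, "as in [Bu]"): the deck-transformation identity $\tilde f\circ\tau=\tilde f+2\pi$ plus naturality for item (1), monotonicity of the sub/superlevel filtrations for item (2), and compactness of supports of singular cycles for item (3). No discrepancy with the paper's intended argument.
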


\begin{proposition}\label {P32}\ 

1. The dimension of $\mathbb F_r(a,b)$ is finite.

2. $\mathbb I_{-\infty}(r) = \mathbb I^{\infty}(r)= T(H_r(\tilde X)).$
\end{proposition}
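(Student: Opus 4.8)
The plan is to verify both assertions first for $X$ a finite simplicial complex and $f$ a PL (hence tame) map, and then to reduce the general case to that one by the Hilbert cube manifold technique of subsection~\ref{SS23}; the reduction is legitimate because all three quantities at stake depend on $f$ only up to homotopy: two lifts of homotopic maps $X\to\mathbb S^1$ differ by a function descending to the compact $X$, hence bounded, and a bounded shift interleaves the filtrations $\{\tilde X_a\}$ and $\{\tilde X^b\}$, so it changes neither $\bigcap_a\mathbb I_a(r)$, nor $\bigcap_b\mathbb I^b(r)$, nor the finiteness of $\dim\mathbb F_r(a,b)$. For part 1, if $b>a$ then $\tilde X^b\subseteq\tilde X^a$, whence $\mathbb F_r(a,b)\subseteq\mathbb F_r(a,a)$, so it suffices to take $b\le a$; then $\tilde X=\tilde X_a\cup\tilde X^b$ with $\tilde X_a\cap\tilde X^b=\tilde f^{-1}([b,a])$, the couple is excisive (thicken to the open sets $\tilde f^{-1}((-\infty,a+\eps))$ and $\tilde f^{-1}((b-\eps,\infty))$ if desired), and the Mayer--Vietoris sequence identifies $\mathbb F_r(a,b)=\mathbb I_a(r)\cap\mathbb I^b(r)$ with the image of $H_r(\tilde f^{-1}([b,a]))$ in $H_r(\tilde X)$; in the PL case $\tilde f^{-1}([b,a])$ is a finite complex, so this image is finite dimensional.

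For part 2 the inclusion $T(H_r(\tilde X))\subseteq\mathbb I_{-\infty}(r)\cap\mathbb I^\infty(r)$ is formal. Write $T:=T(H_r(\tilde X))$, a finite dimensional $\kappa$-subspace. Since $\bigcup_a\mathbb I_a(r)=H_r(\tilde X)$ (Observation~\ref{O31}), one has $T\subseteq\mathbb I_b(r)$ for $b$ large enough; $t_r$ preserves the torsion submodule and is invertible on it, so $T=t_r^mT\subseteq t_r^m\mathbb I_b(r)=\mathbb I_{b+2\pi m}(r)$ for every $m\in\mathbb Z$, and letting $m\to-\infty$, using that $\mathbb I_a(r)$ is increasing in $a$, gives $T\subseteq\mathbb I_a(r)$ for all $a$, i.e. $T\subseteq\mathbb I_{-\infty}(r)$; the same argument (or the substitution $t\leftrightarrow t^{-1}$) gives $T\subseteq\mathbb I^\infty(r)$.

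The reverse inclusions carry the content. In the PL case, choose $a_0$ not a vertex value; then $\tilde X_{a_0}$ deformation retracts onto the full subcomplex of (a subdivision of) $\tilde X$ spanned by the vertices of $\tilde f$-value $<a_0$, and this subcomplex has simplicial chains forming a complex of finitely generated free $\kappa[t^{-1}]$-modules --- the deck transformation $\tau$ moves each simplex to higher $\tilde f$-values, so each $\tau$-orbit contributes to $\tilde X_{a_0}$ exactly a ``half-orbit'' $\{\tau^n\sigma : n\le N_\sigma\}$, free of rank one over $\kappa[t^{-1}]$, and the boundary operator is $\kappa[t^{-1}]$-linear on these generators. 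Hence $H_r(\tilde X_{a_0})$, and therefore its image $\mathbb I_{a_0}(r)$, is finitely generated over the Noetherian ring $\kappa[t^{-1}]$, and so is its submodule $\mathbb I_{-\infty}(r)$. But $\mathbb I_{-\infty}(r)$ is a $\kappa[t^{-1},t]$-submodule of $H_r(\tilde X)$ by Observation~\ref{O31}, and a $\kappa[t^{-1},t]$-module finitely generated over $\kappa[t^{-1}]$ is $\kappa[t^{-1},t]$-torsion: modulo its $\kappa[t^{-1}]$-torsion part, which is finite dimensional over $\kappa$ and hence already $\kappa[t^{-1},t]$-torsion, multiplication by $t$ is an element of $\mathrm{GL}_s(\kappa[t^{-1}])$ whose determinant is a unit of $\kappa[t^{-1}]$, i.e. a nonzero scalar, so Cayley--Hamilton yields a relation monic in $t$ with nonzero constant term, a nonzero element of $\kappa[t^{-1},t]$ annihilating the free part. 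Thus $\mathbb I_{-\infty}(r)\subseteq T$, so $\mathbb I_{-\infty}(r)=T$, and the symmetric argument with $\kappa[t]$, $\mathbb I^b(r)$, $\mathbb I^\infty(r)$ in place of $\kappa[t^{-1}]$, $\mathbb I_a(r)$, $\mathbb I_{-\infty}(r)$ gives $\mathbb I^\infty(r)=T$ as well. I expect the main obstacle to be the two finiteness inputs in the PL model --- the Mayer--Vietoris identification of $\mathbb F_r(a,b)$ with an honest compactly supported homology group, and the half-orbit description showing $\mathbb I_{a_0}(r)$ is finitely generated over $\kappa[t^{-1}]$ --- together with making the homotopy-invariance reduction to arbitrary compact ANRs precise; the remaining module algebra is routine.
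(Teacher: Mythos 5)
Your proposal is correct in substance, but it takes a genuinely different and noticeably heavier route than the paper, above all for part 2. The paper's argument for part 2 is a few lines of linear algebra valid directly for any compact ANR: for $T(H_r(\tilde X))\subseteq \mathbb I^{\infty}(r)$ it takes $y=t^kx$ with $P(t)y=0$, $P$ having nonzero constant term, rewrites $y$ as a combination of $ty,\dots,t^{n}y$ and bootstraps $y$ from $\mathbb I^{b}(r)$ into $\mathbb I^{b+2\pi m}(r)$ for all $m$; and for the hard inclusion it observes that for $x\in\mathbb I^{\infty}(r)$, all the elements $x,t^{-1}x,t^{-2}x,\dots$ lie in $\mathbb I_a(r)\cap\mathbb I^{\infty}(r)\subseteq\mathbb F_r(a,b)$, which is finite dimensional by part 1, so a linear dependence exhibits $x$ as a torsion element. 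Your Noetherian/Cayley--Hamilton argument via the half-orbit $\kappa[t^{-1}]$-chain model of $\tilde X_{a_0}$ is correct for a simplicial map on a finite complex (and is more than is needed: any $\kappa[t^{-1},t]$-module finitely generated over $\kappa[t^{-1}]$ is automatically torsion by a chain-stabilization argument), but it forces all of part 2 through the reduction from compact ANRs to the PL case, which the paper's argument avoids entirely; and that reduction is exactly the step you only sketch --- the bounded-difference-of-lifts argument you give covers two homotopic maps on the same space, whereas for an ANR which is not a polyhedron you must also transfer $\mathbb I_{\pm\infty}(r)$, $T$ and the finiteness of $\dim\mathbb F_r(a,b)$ across a homotopy equivalence $h:K\to X$ with $K$ a finite complex, via the $\tau$-equivariant lift $\tilde h$ and a bounded interleaving coming from a homotopy inverse; this is standard but is left implicit. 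For part 1 your Mayer--Vietoris identification of $\mathbb F_r(a,b)$ with the image of $H_r(\tilde f^{-1}([b,a]))$ in $H_r(\tilde X)$ is the same mechanism the paper invokes by citing \cite{Bu}, except that the cited argument runs directly on the locally compact ANR $\tilde X$ with $\tilde f$ proper, again with no PL reduction; your inclusion $T(H_r(\tilde X))\subseteq\mathbb I_{-\infty}(r)\cap\mathbb I^{\infty}(r)$ (finite dimensionality of $T$ plus $t_r^mT=T$) is fine and close in spirit to the paper's. In short, your approach buys an explicit chain-level picture in the simplicial case; the paper's buys brevity and full generality with no homotopy-invariance bookkeeping, and in particular shows that part 2 is a formal consequence of part 1 and Observation \ref{O31}.
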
 

\begin{proof}\

Item 1. is verified in 
\cite{Bu} based on Meyer--Vietoris sequence and on the observations that $\tilde X$ is a locally compact  ANR and  $\tilde f$ is a proper map.  

Item 2.:  If $x\in T(H_r(\tilde X)) $ then there exists  an integer $k\in \mathbb Z$ and  a polynomial $P(t)= \alpha_n t^n +\alpha_{n-1} t^{n-1} \cdots \alpha_1 t + \alpha_0,$ $\alpha_i\in \kappa,$ $\alpha_0\ne 0$ such that $P(t)\ t^k x=0.$  

Let $y=t^k x.$  By Observation \ref{O31} item 3. one has  $y\in \mathbb I^b$ for some $b \in \mathbb R.$ 
Since  $P(t) y=0$  one concludes that  
 $y= - (\alpha_n/\alpha_0) t^{n-1} \cdots - (\alpha_1/\alpha_0) t y$ and then by Observation \ref{O31} item 1. one has $y\in \mathbb I^{b+2\pi}.$  
 
 Repeating the same argument  one concludes that $y\in I^{b+2\pi k}$ for any $k,$ hence $y \in \mathbb I^\infty.$ Since $x= t^{-k} y,$  by Observation \ref{O31} item 1.  one has  $x\in \mathbb I^\infty.$ Hence $T(H_r(\tilde X))\subseteq I^\infty.$

Let $x\in \mathbb I^\infty.$  By Observation \ref{O31} item 3. one has  $x\in \mathbb I_a$ for some $a\in \mathbb R$ and if $x\in \mathbb I^\infty,$  by Observation \ref {O31} item 1. , all $x, t^{-1}x, t^{-2} x, \cdots t^{-k}x,\cdots \in \mathbb I_a\cap \mathbb I^\infty.$  Since by (1.) 
 the dimension of $\mathbb I_a\cap \mathbb I^\infty$ is finite, there exists $\alpha_{i_1}, \cdots \alpha_{i_k}$ such that $(\alpha_{i_1} t^{-i_1} + \cdots \alpha_{i_k} t^{-i_k}) x=0.$  This makes $x\in T(H_r(\tilde X))$. Hence $\mathbb I^\infty\subseteq T(H_r(\tilde X)).$  
Therefore $\mathbb I^\infty = T(H_r(\tilde X)).$ 
 
 By a similar arguments one concludes that $T(H_r(\tilde  X))= \mathbb I_{-\infty}.$

\end{proof}

\vskip .1in 
It is convenient to consider a  weaker concept of critical values relative to homology with coefficients in the fixed field $\kappa.$

\begin{definition}\label {D33}\
\begin{enumerate}
\item  A real number $c$  is  a {\bf sub level homologically critical values} if for any $\epsilon >0 $
the inclusion $\mathbb I_{c-\epsilon}(r)\subset \mathbb I_{c+\epsilon}(r)$  is strict (not equality). Denote by $CR_-(\tilde f)$ the set of such critical values.
\item  A real number $c$  is  a {\bf over level homologically critical values} if for any $\epsilon >0 $
the inclusion $\mathbb I^{c+\epsilon}(r)\subset \mathbb I^{c-\epsilon}(r)$  is strict (not equality). Denote by $CR^+(\tilde f)$ the set of such critical values.
\item A real number $c$  is  a {\bf homologically critical values} if it belongs to 
$CR(\tilde f):= CR_-(\tilde f)\cup CR^+(\tilde f)$
\end{enumerate}
\end{definition}
Observation  \ref{O31} can be completed with the following observation.

\begin{obs}\label {O32}\ 
Suppose $\tilde f:\tilde X\to \mathbb R$ is an infinite cyclic cover of $f:X\to \mathbb S^1,$ $X$ a compact ANR.
\begin{enumerate}
\item $CR_-(\tilde f)$, $CR^+(\tilde f)$ and then $CR(\tilde f)$ are discrete sets. Moreover there exists $\tilde \epsilon (f) >0$ such that $\tilde \epsilon(f) <|c'-c''| \ $  for $c',c''\in CR(\tilde f), c'\ne c''.$ 
\item  The map  $f$ is tame iff $\tilde f$ is tame.
\item  $CR(\tilde f)\subseteq Cr(\tilde f)$ and $\tilde \epsilon(f) \geq \epsilon (f).$
\end{enumerate}
\end{obs}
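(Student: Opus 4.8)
\emph{Part 1.} The deck transformation $\tau$ satisfies $\tilde f\circ\tau=\tilde f+2\pi$, so it restricts to homeomorphisms $\tilde X_a\to\tilde X_{a+2\pi}$ and $\tilde X^b\to\tilde X^{b+2\pi}$ compatible with the inclusions into $\tilde X$; hence by Observation~\ref{O31}(1) the map $t_r$ carries $\mathbb I_a(r)$ isomorphically onto $\mathbb I_{a+2\pi}(r)$ and $\mathbb I^b(r)$ onto $\mathbb I^{b+2\pi}(r)$, so strictness of the inclusions defining $CR_-$ and $CR^+$ is preserved by $c\mapsto c+2\pi$ and all three of $CR_-(\tilde f)$, $CR^+(\tilde f)$, $CR(\tilde f)$ are $2\pi$-periodic. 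It thus suffices to bound the number of their points in $[0,2\pi]$. The key input is that $\mathbb I_b(r)/\mathbb I_a(r)$ is finite dimensional over $\kappa$ for $a<b$: it is a quotient of the cokernel of $H_r(\tilde X_a)\to H_r(\tilde X_b)$, which by the homology exact sequence of the pair embeds into $H_r(\tilde X_b,\tilde X_a)$, and properness of $\tilde f$ with excision identifies the latter with the homology of the compact pair $(\tilde f^{-1}([a',b]),\tilde f^{-1}([a',a]))$, $a'<a$; this is finite dimensional by the same reasoning proving Proposition~\ref{P32}(1), via the reduction to finite simplicial complexes of Subsection~\ref{SS23}, cf.\ \cite{Bu}. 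Granting this, if $c_1<\cdots<c_k$ are distinct points of $CR_-(\tilde f)\cap[0,2\pi]$, pick $\epsilon>0$ so small that the intervals $(c_i-\epsilon,c_i+\epsilon)$ are pairwise disjoint and contained in $(-1,2\pi+1)$; monotonicity of $a\mapsto\mathbb I_a(r)$ and the strict inclusions defining $CR_-$ give a chain
\[
\mathbb I_{-1}(r)\subseteq\mathbb I_{c_1-\epsilon}(r)\subsetneq\mathbb I_{c_1+\epsilon}(r)\subseteq\cdots\subsetneq\mathbb I_{c_k+\epsilon}(r)\subseteq\mathbb I_{2\pi+1}(r),
\]
so $k\le\dim_\kappa\big(\mathbb I_{2\pi+1}(r)/\mathbb I_{-1}(r)\big)<\infty$; the same bound holds for $CR^+(\tilde f)$. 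A $2\pi$-periodic set with finitely many points per period is discrete and the infimum of the distances between its distinct points is positive; one takes $\tilde\epsilon(f)$ to be that infimum (or any slightly smaller positive number, should the strict inequality in the statement be insisted on).

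\emph{Part 2.} Here I would prove $Cr(\tilde f)=p^{-1}(Cr(f))$, whence the equivalence of tameness follows at once: $p$ is a covering map of the compact circle, so a discrete $Cr(f)$ is finite, $p^{-1}(Cr(f))$ is then $2\pi$-periodic with finitely many points per period hence discrete, and conversely a discrete $2\pi$-periodic subset of $\mathbb R$ projects onto a finite subset of $\mathbb S^1$. For the identity, fix $y\in\mathbb S^1$, $\tilde y\in p^{-1}(y)$ and an evenly covered arc $U\ni y$ whose sheet $\tilde U\ni\tilde y$ is carried homeomorphically onto $U$ by $p$. Since $f\circ\pi=p\circ\tilde f$ and $\pi$ is the infinite cyclic cover of $f$, the restriction $\pi:\tilde f^{-1}(\tilde U)\to f^{-1}(U)$ is a homeomorphism --- it is a local homeomorphism, being a restriction of a covering, and it is bijective since each $\pi$-fiber over a point of $f^{-1}(U)$ meets $\tilde f^{-1}(\tilde U)$ in exactly one point --- and it carries $\tilde f^{-1}(\tilde y')$ onto $f^{-1}(p(\tilde y'))$ for every $\tilde y'\in\tilde U$. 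Thus the requirement that $\tilde U$ witness regularity of $\tilde y$ for $\tilde f$ corresponds, under $\pi$, to the requirement that $U$ witness regularity of $y$ for $f$; combined with the $\tau$-equivariance relating the points of $p^{-1}(y)$ (and the freedom to shrink witnesses to evenly covered arcs), this gives $Cr(\tilde f)=p^{-1}(Cr(f))$.

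\emph{Part 3.} For $CR(\tilde f)\subseteq Cr(\tilde f)$ I would show that a regular value $c$ of $\tilde f$ lies in neither $CR_-(\tilde f)$ nor $CR^+(\tilde f)$: if $U=(c-\epsilon,c+\epsilon)$ witnesses regularity, then by the first-deformation lemma for this (weak) notion of regular value --- cf.\ \cite{Bu} --- the inclusion $\tilde f^{-1}(c-\epsilon')\hookrightarrow\tilde f^{-1}([c-\epsilon',c+\epsilon'])$ is a homotopy equivalence whenever $0<\epsilon'<\epsilon$; gluing along it shows that $\tilde X_{c-\epsilon'}\hookrightarrow\tilde X_{c+\epsilon'}$ and, symmetrically, $\tilde X^{c+\epsilon'}\hookrightarrow\tilde X^{c-\epsilon'}$ are homotopy equivalences. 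Since $H_r(\tilde X_{c-\epsilon'})\to H_r(\tilde X)$ factors through the isomorphism $H_r(\tilde X_{c-\epsilon'})\to H_r(\tilde X_{c+\epsilon'})$, one gets $\mathbb I_{c-\epsilon'}(r)=\mathbb I_{c+\epsilon'}(r)$, and likewise $\mathbb I^{c-\epsilon'}(r)=\mathbb I^{c+\epsilon'}(r)$, so the strict inclusions required by Definition~\ref{D33} fail at $c$ for $\epsilon'$. For $\tilde\epsilon(f)\ge\epsilon(f)$: by Part~2 two consecutive points of $Cr(\tilde f)=p^{-1}(Cr(f))$ are at distance at least the $\mathbb S^1$-distance between their distinct images in $Cr(f)$, hence at least $\epsilon(f)$; since $CR(\tilde f)\subseteq Cr(\tilde f)$, every gap of $CR(\tilde f)$ is $\ge\epsilon(f)$, so $\tilde\epsilon(f)$ --- the infimum of those distances --- is $\ge\epsilon(f)$ (and when $f$ is not tame this is just $\tilde\epsilon(f)>0=\epsilon(f)$).

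\emph{Main obstacle.} The real technical content sits in the two facts imported from \cite{Bu} and Section~2: the finiteness of $\mathbb I_b(r)/\mathbb I_a(r)$ used in Part~1 (which needs excision for noncompact pairs, properness of $\tilde f$, and a finite-complex model for $\tilde X$) and the first-deformation lemma used in Part~3 (invariance, up to homotopy, of the sub- and super-level sets across a regular value). Everything else --- the periodicity, the dimension-counting chain, and the covering-space bookkeeping of Part~2 --- is routine.
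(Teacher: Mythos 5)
Your Parts 2 and 3 are at the level of the paper itself, which disposes of them as ``straightforward consequences of definitions,'' and your Part 1 strategy (periodicity plus a dimension-counting chain) is the natural reading of the paper's one-line justification of item 1 via Observation \ref{O31} and Proposition \ref{P32}. The problem is the finiteness input you feed into the chain. You bound $\dim_\kappa\bigl(\mathbb I_b(r)/\mathbb I_a(r)\bigr)$ by the dimension of $H_r(\tilde X_b,\tilde X_a)$, identified by excision with the homology of the compact pair $(\tilde f^{-1}([a',b]),\tilde f^{-1}([a',a]))$, and you assert this is finite dimensional ``by the same reasoning proving Proposition \ref{P32}(1).'' That assertion is false for merely continuous $f$: preimages of intervals are only compact subsets of the ANR $\tilde X$, not ANRs, and can have infinitely generated homology. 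Concretely, take $X=\mathbb S^1$ and a degree-one map whose lift $\tilde f:\mathbb R\to\mathbb R$ oscillates infinitely often near a point, crossing the level $b$ infinitely many times while staying above $a$; then $\tilde X_b$ has infinitely many components disjoint from $\tilde X_a$, so $H_0(\tilde X_b,\tilde X_a)$ and the cokernel of $H_0(\tilde X_a)\to H_0(\tilde X_b)$ are infinite dimensional, even though $\mathbb F_0(a,b)\subseteq H_0(\tilde X)=\kappa$ is of course finite dimensional. This is precisely why the paper's apparatus is set up in terms of the images $\mathbb I_a(r),\mathbb I^b(r)$ inside $H_r(\tilde X)$ and the spaces $\mathbb F_r(a,b)=\mathbb I_a(r)\cap\mathbb I^b(r)$ of Proposition \ref{P32}, rather than in terms of homology of sub-level sets or slabs.

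So what your chain argument actually needs is a per-period finiteness for the jumps of $a\mapsto\mathbb I_a(r)$ (e.g.\ $\dim_\kappa\bigl(\mathbb I_{a+2\pi}(r)/\mathbb I_a(r)\bigr)<\infty$), and this cannot be reached through the slab pair; nor does Proposition \ref{P32}(1) convert into it directly, since a witness of a jump at $c$ only lies in some $\mathbb I^{b}(r)$ with $b$ depending on the witness, so the boxes $\mathbb F_r(\cdot,b)$ you would count in are not uniform in $c$. Supplying this bound at the level of images (the Mayer--Vietoris/properness argument of \cite{Bu}, or a reduction to tame maps via subsection \ref{SS23}) is the real content hidden behind the paper's phrase ``Item 1.\ follows from Observation \ref{O31} and Proposition \ref{P32},'' and it is missing from your proof. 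Two smaller points, at the same level of informality as the paper's own treatment of items 2 and 3: in Part 2 you silently shrink a witnessing neighborhood of a regular value to an evenly covered arc, but the definition of regular value does not obviously pass to smaller neighborhoods; and in Part 3 the ``first-deformation lemma'' for this weak homological notion of regularity is not a formal consequence of the definition. These could be repaired; the Part 1 step above is an actual error.
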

Item 1. follows from Observation \ref{O31} and Proposition \ref{P32}, 
while  items 2. and 3. are  straightforward consequences of definitions. 
\vskip .2in
{\it Boxes;} For $a'<a, b< b'$ one considers  the domain $B=(a',a]\times [b,b')$ (see the Figure 1) and call it a finite box.  An infinite box is of the form $(-\infty,a]\times [b,b')$ or $(-\infty,a]\times [b,\infty)$ or $(a',a]\times [b,\infty).$

\vskip .2in

\begin{figure}
\begin{center}
\begin{tikzpicture}
\draw [<->]  (0,3) -- (0,0) -- (3,0);
\node at (-0.5,2.8) {y-axis};
\node at (2.9,-0.2) {x-axis};
\node at (1,0.3) {(a',b)};
\node at (2.9,0.3) {(a,b)};
\node at (3,2.3) {(a,b')};
\node at (1,2.3) {(a',b')};
\draw [<->]  (0,-3) -- (0,0) -- (-3,0);
\draw [thick] (1,0.5) -- (2.9,0.5);
\draw [thick] (2.9,0.5) -- (2.9,2);
\draw [dotted] (2.9,2) -- (1,2);
\draw [dotted] (1,2) -- (1,0.5);
\draw (0,0) -- (2.5,2.5);
\draw (0,0) -- (-2.5,-2.5);
\end{tikzpicture}
\caption {The {\it box} $ B : =(a',a]\times [b,b')\subset \mathbb R^2$  }
\end{center}
\end{figure}
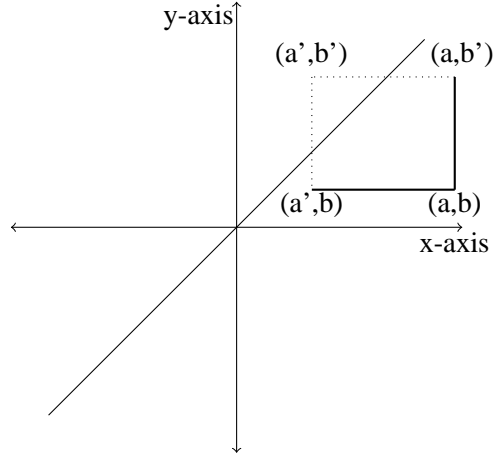
For a box $B$  denote by: 
\begin{equation}
\begin{aligned} 
\mathbb F'_r(B):= \mathbb F_r(a',b)& + \mathbb F_r(a,b') \subseteq \mathbb F_r(a,b)\\
 \mathbb F_r(B):= &\mathbb F_r(a,b)/ \mathbb F'_r( B)
 \end{aligned}
 \end{equation}
and let $\pi^B_{ab,r}: \mathbb F_r(a,b)\to \mathbb F_r(B)$ the projection on quotient space. 

\newcommand{\mypicture}[1][ ]{
\begin{tikzpicture} [scale=1]
\draw [line width=0.10cm] (0,0) -- (5,0);
\draw [line width=0.10cm] (5,0) -- (5,3);
\draw [dashed, ultra thick] (0,3) -- (5,3);
\draw [dashed, ultra thick] (0,0) -- (0,3);
\draw [line width=0.10cm] (3,0) -- (3,3);
\node at (1.5,1.5) {B1};
\node at (4,1.5) {B2};
\node at (2.5, -0.5){Figure 2};
\end{tikzpicture}
\hskip  .5in
\begin{tikzpicture}[ ] [scale=0.8]
\draw [line width=0.10cm] (7,0) -- (12,0);
\draw [line width=0.10cm] (12,0) -- (12,3);
\draw [dashed, ultra thick] (7,0) -- (7,3);
\draw [dashed, ultra thick] (7,3) -- (12,3);
\draw [line width=0.10cm] (7,1) -- (12,1);
\node at (9.5,2) {B1};
\node at (9.5,0.5) {B2};
\node at(9.5, -0.5) {Figure 3};
\end{tikzpicture}}
\vskip .1in

\newcommand{\mynewpicture}[1][ ]{
\begin{tikzpicture} [scale=1]
\draw [dashed, ultra thick] (0,0) -- (0,3);
\draw [line width=0.10cm] (5,0) -- (5,3);
\draw [line width=0.10cm] (2,1) -- (2,3);
\draw [line width=0.10cm] (0,1) -- (2,1);
\draw [line width=0.10cm] (0,0) -- (5,0);
\draw [dashed, ultra thick] (0,3) -- (5,3);
\node at (1,2) {B'};
\node at (3.5,0.8) {B''};
\node at (2.5, -0.5) {Figure 4};
\end{tikzpicture}
\hskip .5in
\begin{tikzpicture} [scale=1]
\draw [dashed, ultra thick] (0,0) -- (0,3);
\draw [line width=0.10cm] (5,0) -- (5,3);
\draw [dashed, ultra thick] (3,1.5) -- (5,1.5);
\draw [dashed, ultra thick] (3,0) -- (3,1.5);
\draw [line width=0.10cm] (0,0) -- (5,0);
\draw [dashed, ultra thick] (0,3) -- (5,3);
\node at (1,2) {B''};
\node at (3.5, 0.8) {B'};
\node at (2.5, -0.5) {Figure 5};
\end{tikzpicture}}

\vskip .1in

In view of the definition of $\mathbb F_r(a,b)$ and of $\mathbb F_r(B)$  it is straightforward to observe (as in \cite{Bu}) 
that the following statements are  true for either finite or infinite boxes. The linear maps involved are ultimately induced by inclusions $\mathbb F_r(a',b')\subseteq \mathbb F_r(a,b)$ for $a'\leq a, b\leq b'$  (cf  Observation\ref{O31} item 2) and  by '' passings to quotient spaces''.  

\begin{obs}\label{O35}\
\begin{enumerate}
\item  If $a''<a'<a ,$  $b< b''$ (possibly $a''=-\infty, b''=\infty$) and $B_1,$ $B_2$ and $B$  are the boxes ,$B_1= (a'',a']\times [b,b'),$ $B_2= (a',a]\times [b,b'')$
and $B= (a'',a]\times [b,b') $ (see Figure 2)
then the inclusions $B_1\subset B$ and $B_2\subset B$ induce the linear maps 
${i^B_{B_1,r}}: \mathbb F_r(B_1)\to  \mathbb F_r(B)$ and $\pi^{B_2}_{B,r}:  \mathbb F_r(B)\to  \mathbb F_r(B_2)$  such that the  sequence 
$$\xymatrix{0\ar [r]& \mathbb F_r(B_1)\ar[r]^{i^B_{B_1,r}} & \mathbb F_r(B)\ar[r]^{\pi^{B_2}_{B,r}}& \mathbb F_r(B_2)\ar[r]& 0}.$$ 
is exact.
\item If $a'<a,$ $b<b'< b''$ (possibly $a'=-\infty, b''=+\infty$) and $B_1,$ $B_2$ and $B$  are the boxes, $B_1= (a',a]\times [b',b''),$ $B_2= (a',a]\times [b',b')$
and $B= (a',a]\times [b,b'') $ (see Figure 3) 
then the inclusions $B_1\subset B$ and $B_2\subset B$ induce the linear maps 
$i^B_{B_1,r}:  \mathbb F_r(B_1)\to  \mathbb F_r(B)$ and $\pi^{B_2}_{B,r}:  \mathbb F_r(B)\to  \mathbb F_r(B_2)$  such that the  sequence $$\xymatrix{0\ar [r]& \mathbb F_r(B_1)\ar[r]^{i^B_{B_1,r}} & \mathbb F_r(B)\ar[r]^{\pi^{B_2}_{B,r}} & \mathbb F_r(B_2)\ar[r]& 0}$$ is exact.

\mypicture {}

\item If $B'$and $B''$ are two boxes with $B'\subseteq B''$ and  $B'$ is located in the upper left corner of $B''$ (see  Figure 4) then the inclusion of boxes induces the canonical injective linear maps $i^{B''}_{B',r}: \mathbb F_r(B')\to \mathbb F_r(B'').$ 
 If $B'$and $B''$ are two boxes with $B'\subseteq B''$ and  $B'$ is located in the lower right  corner of $B''$ (see Figure 5) then the inclusion of boxes induces the canonical surjective  linear maps $\pi^{B''}_{B',r}: \mathbb F_r(B')\to \mathbb F_r(B'').$ 
\vskip .2in
\mynewpicture { }
 \item If $B$ is a finite disjoint union of boxes $B=\sqcup B_i$ then $\mathbb F_r(B)$ is isomorphic to $\oplus _i \mathbb F_r(B_i);$
the isomorphism is not canonical. 
\end{enumerate}
\end{obs}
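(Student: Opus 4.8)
The plan is to treat the four items as consequences of a single mechanism: each $\mathbb F_r(B)$ is a quotient $\mathbb F_r(a,b)/\mathbb F'_r(B)$ of one of the nested subspaces $\mathbb F_r(a,b)=\mathbb I_a(r)\cap\mathbb I^b(r)$, and all arrows in sight are induced either by the inclusions $\mathbb F_r(a',b')\subseteq\mathbb F_r(a,b)$ of Observation \ref{O31}(2) or by passage to a further quotient; once one checks that the relevant numerator inclusion carries the relevant denominator into the relevant denominator, the induced map on quotients is well defined, and one only has to identify kernel and image. So the first step is purely bookkeeping: for each configuration of boxes, write out explicitly the four corner coordinates, the subspaces $\mathbb I_a(r)$, $\mathbb I^b(r)$, and the two denominators $\mathbb F'_r(\cdot)$ as sums of two $\mathbb F_r$'s, and record which sums sit inside which. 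The elementary lattice facts I will use repeatedly are that for subspaces of a fixed vector space, intersection distributes over a sum when one summand is comparable ($U\cap(V+W)=V+(U\cap W)$ when $V\subseteq U$), and the second isomorphism theorem.

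For items (1) and (2) (Figures 2 and 3) the claim is a short exact sequence $0\to\mathbb F_r(B_1)\to\mathbb F_r(B)\to\mathbb F_r(B_2)\to0$. Here $B_1\subset B$ is a ``lower-left'' or ``upper-left'' sub-box and $B_2\subset B$ a ``lower-right'' or ``lower'' sub-box, so by item (3)'s mechanism the first map is injective and the second surjective; exactness in the middle amounts to the statement that the image of $\mathbb F_r(B_1)$ in $\mathbb F_r(B)$ equals the kernel of $\pi^{B_2}_{B,r}$. Unwinding definitions, in the Figure 2 case $\mathbb F_r(B)=\mathbb F_r(a,b)/(\mathbb F_r(a'',b)+\mathbb F_r(a,b'))$ and the kernel of the projection to $\mathbb F_r(B_2)=\mathbb F_r(a,b)/(\mathbb F_r(a',b)+\mathbb F_r(a,b''))$ is the image of $\mathbb F_r(a',b)+\mathbb F_r(a,b'')$; one must show this image coincides with the image of $\mathbb F_r(a'',b)+\mathbb F_r(a',b')$, i.e. of $\mathbb F_r(B_1)$'s numerator $\mathbb F_r(a',b)\cap\mathbb I^b$ modulo the $B$-denominator. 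This is where the distributivity identity is applied to $\mathbb I_{a'}\cap\mathbb I^b$ versus $\mathbb I_a\cap\mathbb I^{b'}$, using $\mathbb I_{a'}\subseteq\mathbb I_a$; I expect this middle-exactness verification, done once carefully for Figure 2 and then mirrored for Figure 3, to be the main obstacle — not deep, but the one place the argument can go wrong if a containment is mis-stated. (For the infinite-box variants I would invoke Proposition \ref{P32}, which identifies $\mathbb I_{-\infty}=\mathbb I^\infty=T(H_r(\tilde X))$, so that the ``$a''=-\infty$'' or ``$b''=\infty$'' cases reduce to the finite case with these limiting subspaces substituted in; finiteness of all dimensions involved comes from Proposition \ref{P32}(1).)

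For item (3) the two assertions are: if $B'$ is the upper-left corner of $B''$ then the numerator of $\mathbb F_r(B')$ includes into that of $\mathbb F_r(B'')$ and, crucially, the preimage of the $B''$-denominator meets $\mathbb F_r(B')$'s numerator in exactly $\mathbb F_r(B')$'s denominator, giving injectivity; if $B'$ is the lower-right corner then $\mathbb F_r(B')$'s numerator already maps onto $\mathbb F_r(B'')$ because the $B''$-denominator's ``left'' and ``upper'' pieces are absorbed, giving surjectivity. Both follow by the same corner/distributivity computation as above, in fact they are the degenerate cases of (1) and (2) where one of $B_1,B_2$ fills all of $B$. For item (4), with $B=\sqcup_i B_i$ a finite disjoint union, I would induct on the number of boxes, peeling off one box at a time along a horizontal or vertical cut using the exact sequences of (1)–(2): a disjoint union splits as $B=B_1\sqcup B'$ with $B_1$ in a corner of the bounding box, and the relevant short exact sequence of vector spaces splits (non-canonically, over a field), giving $\mathbb F_r(B)\cong\mathbb F_r(B_1)\oplus\mathbb F_r(B')$; iterate. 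The non-canonical splitting over $\kappa$ is automatic and is the reason the final isomorphism in (4) is only asserted up to non-canonical choice. Throughout, ``as in \cite{Bu}'' licenses me to keep these verifications brief, since the real-valued analogue is already in the literature and the only new feature is the $\kappa[t^{-1},t]$-equivariance recorded in Observation \ref{O31}(1), which plays no role in the present, purely vector-space-level statements.
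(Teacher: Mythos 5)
Your overall mechanism is the right one, and it is exactly what the paper leaves implicit (the text offers no more than ``straightforward from the definitions, the maps being induced by the inclusions of Observation \ref{O31} item 2 and passage to quotients''): write each $\mathbb F_r(B)$ as $\mathbb F_r(a,b)/\bigl(\mathbb F_r(a',b)+\mathbb F_r(a,b')\bigr)$ and manipulate the subspace lattice. But two attributions in your plan are off. In items (1)--(2) the modular-law computation you describe is what proves \emph{injectivity} of $i^B_{B_1,r}$, not middle exactness: in the Figure~2 case one needs $\mathbb F_r(a',b)\cap\bigl(\mathbb F_r(a'',b)+\mathbb F_r(a,b')\bigr)=\mathbb F_r(a'',b)+\mathbb F_r(a',b')$, which is $U\cap(V+W)=V+(U\cap W)$ with $U\cap W=\mathbb I_{a'}(r)\cap\mathbb I^{b'}(r)=\mathbb F_r(a',b')$; middle exactness, by contrast, is immediate, since modulo the $B$--denominator the image of the numerator $\mathbb F_r(a',b)$ and the image of the kernel subspace $\mathbb F_r(a',b)+\mathbb F_r(a,b')$ coincide. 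You instead delegate injectivity and surjectivity in (1)--(2) to ``item (3)'s mechanism'' and then declare item (3) a degenerate case of (1)--(2); as written this is circular, and item (3) is not a degenerate case (a corner box is strictly smaller in both directions). Prove (3) either directly (if $x\in\mathbb F_r(a',b')$ and $x=v+w$ with $v\in\mathbb F_r(a'',b)$, $w\in\mathbb F_r(a,b'')$, then $w=x-v\in\mathbb I_{a'}(r)$ gives $w\in\mathbb F_r(a',b'')$, and then $v=x-w\in\mathbb I^{b'}(r)$ gives $v\in\mathbb F_r(a'',b')$), or by factoring the corner inclusion through the intermediate box $(a'',a']\times[b,b'')$ and composing the two injections furnished by (1) and (2).

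Also watch the direction of the surjection in item (3): when $B'$ sits in the lower-right corner of $B''$ the two numerators coincide (both equal $\mathbb F_r(a,b)$) and the denominators satisfy $\mathbb F'_r(B'')\subseteq\mathbb F'_r(B')$, so the canonical surjection goes $\mathbb F_r(B'')\to\mathbb F_r(B')$, from the big box onto the corner box, consistently with $\pi^{B_2}_{B,r}$ in (1)--(2); a canonical map out of $\mathbb F_r(B')$ in the opposite direction is not even well defined, because the needed containment of denominators fails in general. (The displayed arrow in the paper has the same misprint, but your justification -- ``the numerator already maps onto'' -- does not produce the map you claim.) Finally, for item (4) your induction peels off one box along a full horizontal or vertical cut, which only treats guillotine partitions; a pinwheel partition of a box into five boxes admits no such cut, so the induction as stated fails there. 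The repair is routine: refine the partition to the grid generated by all edges of the $B_i$, apply (1)--(2) repeatedly to the grid (over a field every short exact sequence splits, whence the non-canonical isomorphism), and then regroup the grid cells lying in each $B_i$. With these corrections your plan is precisely the ``straightforward'' verification the paper alludes to.
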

\vskip .2in

For $\epsilon >0$ one denotes by $B(a,b; \epsilon): = (a-\epsilon,a]\times [b, b+\epsilon)$  and then for $\epsilon' >\epsilon$ one 
has the surjective linear maps $ \mathbb F_r(B(a,b; \epsilon'))\to \mathbb F_r(B(a,b; \epsilon)).$   
In view of Proposition  \ref{P32} item 1. the  limit  
$$\hat \delta^{\tilde f}_r(a,b):= \varinjlim _{\epsilon\to 0} \mathbb F_r(B(a,b; \epsilon))$$ exists and one defines
$$\delta^{\tilde f}_r(a,b):= \dim (\hat \delta^{\tilde f}_r(a,b)).$$
In view of Observation \ref{O32} item 1. the limit stabilizes and for $\epsilon$ small enough and one has  

\begin{equation}\label {E2}
\hat \delta^{\tilde f}_r(a,b)= \mathbb F_r(B(a,b; \epsilon)).
\end{equation} 

It is useful to regard $\hat \delta^{\tilde f}_r(a,b)$ not only as a vector space but as  the quotient of subspaces of $H_r(\tilde X)$ 
$$\hat \delta^{\tilde f}_r(a,b)= \mathbb F_r(a,b) / \mathbb F'_r(a,b),$$  
with $$\mathbb F'_r(a,b):=\mathbb F'_r(B(a,b; \epsilon)) = \mathbb F_r(a-\epsilon, b) +\mathbb F_r(a, b+\epsilon)\subseteq \mathbb F_r(a,b)\subseteq H_r(\tilde X)$$ for  
$\epsilon$ small enough 
\footnote {The right side of the equality above is 
independent of  $\epsilon$ if  $\epsilon$ is small enough}.  Note that if at least one of $a,b$ are not homologically critical values then $\mathbb F_r(a,b)= \mathbb F'_r(a,b)$ and if both $a,b$ are homologically critical values then  the stabilization $\mathbb F'_r(a,b)= \mathbb F'_r(B(a,b;\epsilon))$ holds for 
any $\epsilon$ s.t. $0 < \epsilon <\tilde \epsilon (f).$
\vskip .1in
Define 
$$\supp (\delta^{\tilde f}_r)=\supp (\hat \delta^{\tilde f}_r):  =\{(a,b)\mid \delta^{\tilde f}_r(a,b)\ne 0\}.$$

As a consequence of equality (\ref{E2}) and of Observation \ref{O35}
one has 
\begin{proposition} \label{P36}\

\begin{enumerate} 
\item $\delta^{\tilde f}_r(a,b)\ne 0$ implies both $a,b \in CR(\tilde f).$
\item For any finite or infinite box $B$ the set $\supp (\delta^{\tilde f}_r) \cap B$ is  finite.
\item For any finite or infinite box $$\sum_{(a,b)\in \supp (\delta^{\tilde f}_r)}  {\delta}^{\tilde f}_r(a,b) = \dim \mathbb F_r(B).$$
\end{enumerate}
\end{proposition}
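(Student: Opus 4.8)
\textbf{Proof plan for Proposition \ref{P36}.}
The plan is to deduce all three items from the stabilization equality (\ref{E2}), namely $\hat\delta^{\tilde f}_r(a,b)=\mathbb F_r(B(a,b;\epsilon))$ for $\epsilon$ small, together with the structural properties of $\mathbb F_r(B)$ recorded in Observation \ref{O35} and the finite-dimensionality statement of Proposition \ref{P32} item 1. For item 1, I would argue contrapositively: if $a\notin CR(\tilde f)$ then, by Definition \ref{D33} applied in both directions, for small $\epsilon$ the inclusions $\mathbb I_{a-\epsilon}(r)\subseteq\mathbb I_{a+\epsilon}(r)$ and $\mathbb I^{a+\epsilon}(r)\subseteq\mathbb I^{a-\epsilon}(r)$ are equalities; intersecting with the appropriate $\mathbb I^b$ or $\mathbb I_b$ gives $\mathbb F_r(a-\epsilon,b)=\mathbb F_r(a,b)$, hence $\mathbb F'_r(a,b)=\mathbb F_r(a,b)$ and $\hat\delta^{\tilde f}_r(a,b)=0$ (this is exactly the remark made just after (\ref{E2})). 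The case $b\notin CR(\tilde f)$ is symmetric, using the second summand $\mathbb F_r(a,b+\epsilon)$ of $\mathbb F'_r(a,b)$. So $\delta^{\tilde f}_r(a,b)\ne 0$ forces $a,b\in CR(\tilde f)$.

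For item 2, combine item 1 with Observation \ref{O32} item 1: $CR(\tilde f)$ is discrete and $\tilde\epsilon(f)>0$, so the candidate points of $\supp(\delta^{\tilde f}_r)$ inside any bounded box form a set that is a product of two discrete sets, hence finite if the box is bounded. For an infinite box I would invoke the $\kappa[t^{-1},t]$-equivariance from Observation \ref{O31} item 1: $t_r$ carries $\mathbb F_r(a,b)$ isomorphically onto $\mathbb F_r(a+2\pi,b+2\pi)$ and (being an isomorphism compatible with the defining inclusions) carries $\mathbb F'_r(a,b)$ onto $\mathbb F'_r(a+2\pi,b+2\pi)$, hence $\delta^{\tilde f}_r(a,b)=\delta^{\tilde f}_r(a+2\pi,b+2\pi)$. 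Thus in the direction along the diagonal the configuration is $2\pi$-periodic; but an infinite box extends to $-\infty$ in the $x$-direction or to $+\infty$ in the $y$-direction, which pushes the box strictly away from the diagonal, so by Proposition \ref{P32} item 2 (which identifies $\mathbb I_{-\infty}(r)=\mathbb I^\infty(r)=T(H_r(\tilde X))$ with a finite-dimensional space) the ``boundary contributions'' are controlled: concretely $\mathbb F_r(B)$ is finite-dimensional for every box by Proposition \ref{P32} item 1 applied after noting $\mathbb F_r(a,b)\supseteq\mathbb F_r(B)$ as a subquotient, and a finite-dimensional space cannot contain an infinite well-separated family of nonzero subquotients.

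Item 3 is then the accounting statement, and it is where I expect the real work to sit. The idea is to fix a box $B$ and exhaust it by a grid: choose a finite partition of $B$ into small sub-boxes $B_{ij}$ obtained by slicing along lines $x=a_i$ and $y=b_j$ with consecutive spacings less than $\tilde\epsilon(f)$, chosen so that every point of $\supp(\delta^{\tilde f}_r)\cap B$ — a finite set by item 2 — lies in the interior corner of exactly one $B_{ij}$. Repeated application of the short exact sequences in Observation \ref{O35} items 1 and 2 (first slicing vertically, then horizontally, or vice versa) gives $\dim\mathbb F_r(B)=\sum_{i,j}\dim\mathbb F_r(B_{ij})$, since dimension is additive over short exact sequences. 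For each sub-box $B_{ij}$, if it contains a support point $(a,b)$ then by the stabilization (\ref{E2}) and the choice of mesh $\dim\mathbb F_r(B_{ij})=\delta^{\tilde f}_r(a,b)$, and if it contains no support point then $\mathbb F_r(B_{ij})=0$ (its dimension would be a sum of $\delta^{\tilde f}_r$-values over support points inside it, by the same inductive slicing argument applied recursively, all of which vanish). Summing over the grid yields $\dim\mathbb F_r(B)=\sum_{(a,b)\in\supp(\delta^{\tilde f}_r)\cap B}\delta^{\tilde f}_r(a,b)$.

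\textbf{Main obstacle.} The delicate point is handling the infinite boxes uniformly: the short exact sequences of Observation \ref{O35} are stated for possibly infinite boxes, but one must be careful that the exhaustion argument and the additivity still make sense when $a'=-\infty$ or $b''=+\infty$. Here I would lean on Proposition \ref{P32} item 2 to replace the infinite direction by a single ``$\pm\infty$'' slice, i.e.\ treat $(-\infty,a]$ as one piece whose contribution is governed by $\mathbb I_{-\infty}(r)=T(H_r(\tilde X))$, and note that no new support points accumulate at infinity because, by item 1, all support points have both coordinates in the discrete set $CR(\tilde f)$ and, by the periodicity argument, the configuration near the diagonal is $2\pi$-periodic while an infinite box is bounded away from the diagonal in the relevant direction — so only finitely many periodic copies, in fact none beyond the finite slab, contribute. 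Modulo this bookkeeping, item 3 is a formal consequence of exactness plus stabilization.
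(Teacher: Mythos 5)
Your proposal is correct and follows essentially the same route as the paper: item 1 via the remark that a non-critical coordinate forces $\mathbb F_r(a,b)=\mathbb F'_r(a,b)$, item 2 via the finite-dimensionality of $\mathbb F_r(B)$ (a subquotient of $\mathbb F_r(a,b)$ at the bottom-right corner, Proposition \ref{P32} item 1) so that infinitely many support points are impossible, and item 3 via a grid partition into sub-boxes each containing at most one support point together with the short exact sequences of Observation \ref{O35} items 1 and 2. The periodicity digression for infinite boxes in your item 2 is unnecessary --- the finite-dimensionality argument you end with is exactly the paper's.
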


\begin{proof}\ 

Item 1. follows from Definition \ref{D33} and Observation \ref{O35} item 1. .

Item 2.: If  $B\cap \supp (\delta^{\tilde f}_r)$ is an infinite set  then in view of Observation \ref{O35} item 4. and of the equality (\ref{E2}) $\dim \mathbb F_r(B)$ is infinite. This is not possible since this dimension is  smaller than $\dim \mathbb F_r(a,b)$ with $(a,b)$  the right bottom corner of $B,$ which by Proposition \ref{P32} is finite. 

Item 3. : By Observation \ref{O35} item 3. and the equality (\ref{E2}),  if $B\cap \supp (\delta^{\tilde f}_r)=\emptyset$  then $\mathbb F_r(B)= 0,$ and if 
$B\cap \supp (\delta^{\tilde f}_r)=(a',b')$  then $\mathbb F_r(B)= \hat \delta^{\tilde f}_r(a',b').$ 
The statement  in full generality follows by observing that any box, finite or infinite, can be divided in a finite collection  of disjoint boxes 
$B= \sqcup B_{i,j}, 1\leq i\leq m, 1\leq j\leq n,$ s.t $B_{i,j}$ and $B_{i+1,j}$ are in the position indicated in Figure 2, $B_{i,j}$  and $B_{i,j+1}$ in the position indicated in Figure 3 and each $B_{i,j}$  contains at most one element of $B\cap \supp (\delta^{\tilde f})_r.$  The result is established by induction on $m$ and $n$   by applying Observation (\ref {O35}) item 1. and item 2. and the particular cases mentioned above.
\end{proof}. 
\vskip .2in

The above proposition can be strengthen in the following way. 
For each $(a,b)$ consider the surjective map  $$\pi_r(a,b): \mathbb F_r(a,b)\to \hat \delta^{\tilde f}_r(a,b)$$ and call {\it splitting} a right invers of $\pi_r(a,b),$  $$s_r(a,b): \hat \delta^{\tilde f}_r(a,b)\to \mathbb F_r(a,b).$$ 

For each box $B= (\alpha', \alpha]\times [\beta,\beta')$ with $a\in(\alpha',\alpha],\ b\in [\beta,\beta')$ 
denote by $$i_r^B(a,b) : \hat \delta^{\tilde f}_r(a,b)\to \mathbb F_r(B) \ \rm{and} \  \ i_r(a,b):\hat \delta^{\tilde f}_r(a,b)\to \mathbb H_r(\tilde X) $$  the compositions  
$$\xymatrix {\hat \delta^f_r(a,b)\ar[r]^{s_r(a,b)} &\mathbb F_r(a,b)\ar[r]^{\subseteq} &\mathbb F_r(\alpha, \beta)\ar [r]^{\pi^B_{\alpha \beta ,r}}&\mathbb F_r(B)}$$ and 
$$\xymatrix {\hat \delta^f_r(a,b)\ar[r]^{s_r(a,b)} &\mathbb F_r(a,b)\ar[r]^{\subseteq} &H_r(\tilde X)}.$$
\vskip .1in 

The following diagram reviews for the reader the linear maps considered so far 

\vskip .2in 

\begin{equation}\label {D111}
\xymatrix{ H_r(\tilde X) & \mathbb F_r(a,b)\ar[d]_{\pi^B_{ab,r}}\ar[l]_\supseteq \ar[r]_{\pi_r(a,b)} &\hat \delta^{\tilde f}_r(a,b)\ar@/_1pc/[l]_{s_r(a,b)}\ar@/_2 pc/[ll]_{i_r(a,b)}   \ar[ld]^{i^B_r(a,b)}\\
\mathbb F_r(B_1)\ar[r]^{i^B_{B',r}}&\mathbb F_r(B)\ar[r]_{\pi^{B_2}_{B,r}}&\mathbb F_r(B_2).}
\end{equation}
Observe that if $B=B_1\sqcup B_2$ as in Figure 2 or Figure 3, in view of Observation \ref{O35},  one has 
\begin{obs}\label {O36}\
\begin{enumerate}
\item If $(a,b)\in B_2$  then $\pi^{B_2}_{B,r}\cdot  i^B_r(a,b)$ is injective. 
\item If $(a,b)\in B_1$  then $\pi^{B_2}_{B,r}\cdot  i^B_r(a,b)$ is zero.
\end{enumerate}
 \end{obs}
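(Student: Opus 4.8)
The plan is to obtain both assertions from the exact sequences of Observation~\ref{O35}(1)--(2) together with the stabilization identity $\mathbb F_r(B(a,b;\epsilon))=\hat\delta^{\tilde f}_r(a,b)$, $\pi^{B(a,b;\epsilon)}_{ab,r}=\pi_r(a,b)$ of equation~(\ref{E2}); if $\hat\delta^{\tilde f}_r(a,b)=0$ there is nothing to prove, so one may assume it is nonzero. Write $B=(\alpha',\alpha]\times[\beta,\beta']$, so that $\pi^B_{\alpha\beta,r}\colon\mathbb F_r(\alpha,\beta)\to\mathbb F_r(B)$ is the defining projection and $i^B_r(a,b)=\pi^B_{\alpha\beta,r}\circ s_r(a,b)$, with $s_r(a,b)$ regarded as a map into $\mathbb F_r(a,b)\subseteq\mathbb F_r(\alpha,\beta)$. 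It is enough to treat the vertical splitting of Figure~2, with $B_1=(\alpha',\gamma]\times[\beta,\beta']$ to the left of $B_2=(\gamma,\alpha]\times[\beta,\beta']$; the horizontal splitting of Figure~3 is obtained by interchanging the two coordinates.

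For item~2, where $(a,b)\in B_1$, one has $a\le\gamma$ and $\beta\le b$, hence $\mathbb F_r(a,b)\subseteq\mathbb F_r(\gamma,\beta)$ by Observation~\ref{O31}(2). Thus the image of $i^B_r(a,b)$ lies in $\pi^B_{\alpha\beta,r}(\mathbb F_r(\gamma,\beta))$; since $(\gamma,\beta)$ is the right bottom corner of $B_1$ and $\pi^{B_1}_{\gamma\beta,r}$ is onto, this image equals $i^B_{B_1,r}(\mathbb F_r(B_1))$, which is $\ker\pi^{B_2}_{B,r}$ by the exactness in Observation~\ref{O35}(1). Hence $\pi^{B_2}_{B,r}\circ i^B_r(a,b)=0$; this step is immediate.

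For item~1, where $(a,b)\in B_2$, note that $B_2$ and $B$ have the same right bottom corner $(\alpha,\beta)$, and that $\mathbb F'_r(B)\subseteq\mathbb F'_r(B_2)$ because $\mathbb F_r(\alpha',\beta)\subseteq\mathbb F_r(\gamma,\beta)$ by Observation~\ref{O31}(2). Therefore $\pi^{B_2}_{B,r}$ is the quotient map $\mathbb F_r(B)\twoheadrightarrow\mathbb F_r(B_2)$, so $\pi^{B_2}_{B,r}\circ\pi^B_{\alpha\beta,r}=\pi^{B_2}_{\alpha\beta,r}$ and consequently $\pi^{B_2}_{B,r}\circ i^B_r(a,b)=i^{B_2}_r(a,b)$. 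It then remains to establish the claim that $i^D_r(a,b)$ is injective for every finite or infinite box $D$ containing $(a,b)$. I would prove this in two reductions. First, I shrink $D$ by moving its right edge left to $x=a$ and its bottom edge up to $y=b$; each move presents the smaller box as the left part, respectively the top part, of the larger one, so by Observation~\ref{O35}(1), respectively (2), the induced map $\mathbb F_r(\mathrm{smaller})\to\mathbb F_r(\mathrm{larger})$ is injective, and it intertwines the maps $i^{\bullet}_r(a,b)$ (the only point to check being that $\mathbb F'_r$ of the smaller box lands inside $\mathbb F'_r$ of the larger, which follows from Observation~\ref{O31}(2)); this reduces the claim to the case that $(a,b)$ is the right bottom corner of $D$. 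Second, for such $D$ I shrink further by moving its left edge right to $x=a-\epsilon$ and its top edge down to $y=b+\epsilon$, with $\epsilon$ small; now the smaller box is the right part, respectively the bottom part, of the larger, so Observation~\ref{O35}(1)--(2) gives surjections $\mathbb F_r(\mathrm{larger})\twoheadrightarrow\mathbb F_r(\mathrm{smaller})$ still intertwining the $i^{\bullet}_r(a,b)$. Composing these two surjections produces $\sigma\colon\mathbb F_r(D)\twoheadrightarrow\mathbb F_r(B(a,b;\epsilon))=\hat\delta^{\tilde f}_r(a,b)$ with $\sigma\circ i^D_r(a,b)=\pi_r(a,b)\circ s_r(a,b)=\mathrm{id}$, by (\ref{E2}) and because $s_r(a,b)$ is a right inverse of $\pi_r(a,b)$; hence $i^D_r(a,b)$ is injective, which proves the claim and with it item~1.

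All the conceptual input sits in Observation~\ref{O35}(1)--(2) and equation~(\ref{E2}); the only genuine work in the claim is the bookkeeping, namely checking at each edge-shrinking step that the canonical map between the two spaces $\mathbb F_r(\mathrm{box})$ is already the inclusion- or quotient-map on $\mathbb F_r(a,b)$ (equivalently, that $\mathbb F'_r$ of one box sits inside $\mathbb F'_r$ of the other, in the direction dictated by which corner the two boxes share), so that it commutes with the compositions defining $i^{\bullet}_r(a,b)$. I expect this compatibility check, rather than any conceptual difficulty, to be the main obstacle.
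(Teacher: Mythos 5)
Your argument is correct and follows the same route the paper intends: the paper justifies Observation 3.6 simply by invoking the exact sequences of Observation 3.5 together with the stabilization (\ref{E2}), and your proof is a careful unwinding of exactly that (identifying $\ker\pi^{B_2}_{B,r}$ with the image of $i^{B}_{B_1,r}$ for item 2, and for item 1 reducing to injectivity of $i^{D}_r(a,b)$ via box-shrinking maps that are identities or inclusions on the numerators $\mathbb F_r(\cdot,\cdot)$, split by $\pi_r(a,b)\circ s_r(a,b)=\mathrm{id}$). The compatibility checks you flag are precisely the "passings to quotient spaces" the paper takes as immediate, so no gap.
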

\vskip .1in 

Choose splittings $\{ s_r(a,b)\mid (a,b)\in \supp (\delta^{\tilde f}_r)\},$ 
and consider  the sum of $i_r(a,b)'$s for $(a,b)\in \supp (\delta^{\tilde f}_r).$
$$ I_r= \sum _{(a,b)\in \supp (\delta^{\tilde f}_r)} i_r(a,b): \bigoplus_{(a,b)\in \supp (\delta^{\tilde f}_r)} \hat\delta^f_r(a,b) \to H_r(\tilde X).$$ 
 and for a finite or infinite box $B$  
the sum 
$$I^B_r= \sum_{(a,b)\in \supp (\delta^{\tilde f}_r) \cap B} i^B_r(a,b): \bigoplus_{(a,b)\in \supp (\delta^{\tilde f}_r) \cap B} \hat\delta^f_r(a,b) \to \mathbb F_r(B).$$ 

For $\Sigma\subseteq \supp (\delta^f_r)$ denote by $I_r(K)$ the restriction of $I_r$ to $\bigoplus_{(a,b)\in \Sigma} \hat\delta^f_r(a,b) $ and for $\Sigma \subseteq  \supp (\delta^f_r) \cap B$  denote by  $I^B_r(K)$ the restriction of $I^B_r$ to $\bigoplus_{(a,b)\in \Sigma} \hat \delta^f_r(a,b).$
Note that 
\begin{obs} \label {O37}\ 

For $B=B_1\sqcup B_2$ as in Figures 2 or Figure 3 and $\Sigma\subseteq \supp\ \delta ^{\tilde f}_r $ with $\Sigma= \Sigma_1 \sqcup \Sigma _2$, $\Sigma_1 \subseteq B_1, \Sigma_2\subseteq B_2$ the  diagram 
$$\xymatrix{ \mathbb F_r(B_1) \ar[r] & \mathbb F_r(B) \ar[r] &\mathbb F_r(B_2)\\
\bigoplus _{(a,b)\in \Sigma_1}\hat \delta^{\tilde f}_r(a,b) \ar[u]_{I^{B_1}_r(\Sigma_1)} \ar[r]&\bigoplus_{(a,b)\in \Sigma} \hat \delta^{\tilde f}_r(a,b) \ar[u]_{I^{B}_r(\Sigma)}\ar[r] &\bigoplus_{(a,b)\in \Sigma_2} \hat \delta^{\tilde f}_r(a,b) \ar[u]_{I^{B_2}_r(\Sigma_2)}}
$$
is commutative.  In particular if $I^{B_1}_r(\Sigma_1) $ and $I^{B_2}_r(\Sigma_2)$ are injective then so is $I^B_r(\Sigma).$
\end{obs}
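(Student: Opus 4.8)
The plan is to verify commutativity of the two squares of the diagram separately, and then to deduce the injectivity assertion by a short diagram chase that uses the short exact sequence of Observation~\ref{O35} (item~1 if $B=B_1\sqcup B_2$ is the horizontal split of Figure~2, item~2 if it is the vertical split of Figure~3). As a preliminary I would make the bottom row explicit: since $\Sigma=\Sigma_1\sqcup\Sigma_2$ with $\Sigma_1\subseteq B_1$ and $\Sigma_2\subseteq B_2$, the sum $\bigoplus_{(a,b)\in\Sigma}\hat\delta^{\tilde f}_r(a,b)$ splits canonically as $\big(\bigoplus_{\Sigma_1}\hat\delta^{\tilde f}_r\big)\oplus\big(\bigoplus_{\Sigma_2}\hat\delta^{\tilde f}_r\big)$, the bottom-left arrow being the inclusion of the first summand and the bottom-right arrow the projection onto the second. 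Throughout one fixes a single choice of splittings $\{s_r(a,b)\}$, so that $i^B_r(a,b)$, $i^{B_1}_r(a,b)$, $i^{B_2}_r(a,b)$ and the vertical maps $I^{\bullet}_r$ are all defined consistently; the two squares can then be tested summand by summand.

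For the left square it suffices to show, for each $(a,b)\in\Sigma_1\subseteq B_1$, that $i^B_r(a,b)=i^B_{B_1,r}\circ i^{B_1}_r(a,b)$. Unwinding the definitions collected in diagram~(\ref{D111}), both composites start with $s_r(a,b)$ and then proceed by an inclusion of $\mathbb F_r(a,b)$ into a corner space followed by a quotient projection; the identity is forced because $i^B_{B_1,r}$ is, by its construction in Observation~\ref{O35}, precisely the map induced on quotients by the inclusion $\mathbb F_r(a',b)\subseteq\mathbb F_r(a,b)$ of the corner subspaces, so the two projections compose to the single projection $\pi^B_{ab,r}$ and the two nested inclusions collapse to one inside $H_r(\tilde X)$.

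For the right square I again reduce to a single summand $\hat\delta^{\tilde f}_r(a,b)$, $(a,b)\in\Sigma$. If $(a,b)\in\Sigma_2$, the bottom projection is the identity on that summand and the needed identity $\pi^{B_2}_{B,r}\circ i^B_r(a,b)=i^{B_2}_r(a,b)$ follows in the same way, from $\pi^{B_2}_{B,r}$ being the projection induced by modding out the larger subspace ($\mathbb F'_r(B)\subseteq\mathbb F'_r(B_2)$) together with composability of quotient maps. If $(a,b)\in\Sigma_1$, the bottom projection annihilates that summand, so I must show $\pi^{B_2}_{B,r}\circ i^B_r(a,b)=0$, which is exactly Observation~\ref{O36} item~2. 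Hence the right square commutes. The final assertion is then formal: given $x=(x_1,x_2)$ with $I^B_r(\Sigma)(x)=0$, the right square gives $I^{B_2}_r(\Sigma_2)(x_2)=\pi^{B_2}_{B,r}(I^B_r(\Sigma)(x))=0$, so $x_2=0$ by hypothesis; then $x$ is the image of $x_1$ under the bottom-left inclusion, and the left square gives $i^B_{B_1,r}(I^{B_1}_r(\Sigma_1)(x_1))=0$, so $I^{B_1}_r(\Sigma_1)(x_1)=0$ by injectivity of $i^B_{B_1,r}$ (the monomorphism of the exact sequence in Observation~\ref{O35}), whence $x_1=0$ and $x=0$.

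I expect the only real work --- which is bookkeeping rather than a genuine obstacle --- to be keeping track of which right-bottom corner $(\alpha,\beta)$ and which subspace $\mathbb F'_r$ define each of $\mathbb F_r(B_1)$, $\mathbb F_r(B)$, $\mathbb F_r(B_2)$, and checking that the corresponding corner subspaces of $H_r(\tilde X)$ are nested so that all the triangles in diagram~(\ref{D111}) commute. Once that is arranged everything is purely formal; the substantive inputs are merely the exactness of Observation~\ref{O35} (used only for injectivity of $i^B_{B_1,r}$) and the vanishing in Observation~\ref{O36} item~2.
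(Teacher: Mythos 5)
Your proof is correct and follows exactly the route the paper intends: the paper states Observation 3.7 without proof, treating the commutativity as immediate from the definition of the maps $i^{B}_r(a,b)$ in diagram (\ref{D111}) together with Observations \ref{O35} and \ref{O36}, and the injectivity claim as the standard diagram chase using exactness of the sequence in Observation \ref{O35}. You have simply written out these details (summand-by-summand verification of the two squares, vanishing from Observation \ref{O36} item 2, injectivity of $i^B_{B_1,r}$ from exactness), which is precisely the implicit argument.
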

\vskip .2in 

\begin{proposition}\label{P37}\
\begin{enumerate}
\item For any $\Sigma$ as above the linear maps $I_r (\Sigma),$   and $I^B_r(\Sigma)$ are  injective. 
\item $I^B_r$ is an isomorphism.
\item If $\pi_r$ is the projection $\pi_r: H_r(\tilde X)\to  H_r(\tilde X)/ (\mathbb I_{-\infty}(r) + \mathbb I^\infty(r))= H_r(\tilde X)/ T H_r(\tilde X)$  then 
$\pi_r\cdot I_r$ is an isomorphism. 
\end{enumerate}
\end{proposition}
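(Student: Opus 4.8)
The three statements are entangled, so the plan is to prove them essentially simultaneously by a double induction on the ``box decomposition'' already used in Proposition~\ref{P36}. The engine is the pair of short exact sequences in Observation~\ref{O35} items~1 and~2, together with the compatibility of the maps $I^B_r$ recorded in Observation~\ref{O37}, and the local non‑degeneracy recorded in Observation~\ref{O36}. First I would establish item~1 (injectivity of $I_r(\Sigma)$ and $I^B_r(\Sigma)$) for an arbitrary finite box $B$ and any $\Sigma\subseteq\supp(\delta^{\tilde f}_r)\cap B$; then deduce item~2 for finite boxes by a dimension count; then pass to infinite boxes by an exhaustion/limit argument; and finally derive item~3 from item~2 applied to the infinite box $B=\mathbb R\times\mathbb R$ (or rather $(-\infty,a]\times[b,\infty)$ with $a,b$ large) combined with Proposition~\ref{P32} item~2.

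\textbf{Step 1 (injectivity of $I^B_r(\Sigma)$ for finite boxes, by induction).} I would decompose a finite box $B$ into a grid $B=\sqcup_{i,j}B_{i,j}$, $1\le i\le m$, $1\le j\le n$, exactly as in the proof of Proposition~\ref{P36}, so that horizontally adjacent cells are in the Figure~2 position, vertically adjacent cells are in the Figure~3 position, and each cell contains at most one point of $\supp(\delta^{\tilde f}_r)$. The base case is a single cell $B_{i,j}$: if it contains no point of the support then $\mathbb F_r(B_{i,j})=0$ and there is nothing to prove, while if it contains exactly one point $(a,b)$ then by equation~(\ref{E2}) $\mathbb F_r(B_{i,j})=\hat\delta^{\tilde f}_r(a,b)$ and $I^{B_{i,j}}_r$ is the identity (up to the canonical identification), hence an isomorphism. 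For the inductive step I would cut $B$ into two sub‑boxes $B_1\sqcup B_2$ along a grid line, in the Figure~2 or Figure~3 configuration, and split $\Sigma=\Sigma_1\sqcup\Sigma_2$ accordingly. Observation~\ref{O35} items~1--2 give the short exact sequence $0\to\mathbb F_r(B_1)\to\mathbb F_r(B)\to\mathbb F_r(B_2)\to 0$, and Observation~\ref{O37} gives a map of the evident ``three‑term'' diagram from $\bigoplus_{\Sigma_1}\to\bigoplus_{\Sigma}\to\bigoplus_{\Sigma_2}$ into it; the left and right vertical arrows $I^{B_1}_r(\Sigma_1)$, $I^{B_2}_r(\Sigma_2)$ are injective by the inductive hypothesis, so a diagram chase (the five‑lemma‑style argument spelled out at the end of Observation~\ref{O37}) forces $I^B_r(\Sigma)$ injective. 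This is the one place where Observation~\ref{O36} is needed: to know that the composite $\pi^{B_2}_{B,r}\circ I^B_r(\Sigma)$ restricted to $\bigoplus_{\Sigma_2}$ is exactly $I^{B_2}_r(\Sigma_2)$ (injective) and kills $\bigoplus_{\Sigma_1}$, so that any element of $\ker I^B_r(\Sigma)$ lies in $\bigoplus_{\Sigma_1}$ and then lands in $\ker I^{B_1}_r(\Sigma_1)=0$.

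\textbf{Step 2 (surjectivity for finite boxes; passage to infinite boxes; item~3).} Taking $\Sigma=\supp(\delta^{\tilde f}_r)\cap B$, Proposition~\ref{P36} item~3 gives $\dim\bigl(\bigoplus_{\Sigma}\hat\delta^{\tilde f}_r(a,b)\bigr)=\dim\mathbb F_r(B)$, which is finite by Proposition~\ref{P32} item~1; combined with the injectivity from Step~1 this makes $I^B_r$ an isomorphism, proving item~2 for finite boxes. For item~1 with $B$ replaced by ``no box'' (i.e.\ the map $I_r(\Sigma)$ into $H_r(\tilde X)$), and for infinite boxes, I would use Observation~\ref{O31} item~3: every element of $H_r(\tilde X)$ lies in some $\mathbb I_a(r)\cap\mathbb I^b(r)=\mathbb F_r(a,b)$, and the finite boxes $B(a,b)=(a_0,a]\times[b,b_0)$ exhaust $H_r(\tilde X)$ as $a_0\to-\infty$, $a\to\infty$, $b\to-\infty$, $b_0\to\infty$; an element in the kernel of $I_r(\Sigma)$ would, after intersecting its (finite) support with a large enough finite box, give an element in the kernel of $I^B_r(\Sigma)$ for that box, contradicting Step~1. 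Finally, for item~3: the image of $I_r$ is $\sum_{(a,b)}\mathbb F_r(a,b)$, which by Observation~\ref{O31} item~3 is a subspace of $H_r(\tilde X)$ whose image under $\pi_r$ is all of $H_r(\tilde X)/TH_r(\tilde X)$; since $\pi_r\circ I_r$ is injective on each summand and the summands are well separated (this is precisely what Step~1 establishes), and since $\dim\bigl(\bigoplus\hat\delta^{\tilde f}_r(a,b)\bigr)=\sum\delta^{\tilde f}_r(a,b)=b^N_r$ by Proposition~\ref{P36} item~3 applied to a large box together with Proposition~\ref{P32} item~2 (so that the $\mathbb I_{-\infty}$ and $\mathbb I^\infty$ contributions are exactly the torsion that $\pi_r$ divides out), $\pi_r\circ I_r$ is a surjection between spaces of equal finite dimension, hence an isomorphism.

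\textbf{Main obstacle.} The conceptual content is all in Step~1, and the delicate point there is bookkeeping: making sure that the splittings $s_r(a,b)$ chosen once and for all are compatible with the restriction maps $i^B_r(a,b)$ for \emph{every} box $B$ simultaneously, so that the diagram in Observation~\ref{O37} genuinely commutes on the nose and the induction on the grid closes without a hidden dependence on which cut one makes first. I expect one must check that the order of horizontal‑versus‑vertical cuts does not matter — equivalently, that the two short exact sequences of Observation~\ref{O35} are compatible in a $3\times3$ diagram — and this is the routine but slightly fussy verification I would want to isolate as a preliminary lemma. Everything else (the dimension counts, the exhaustion argument, and the final equal‑dimensions surjection) is then automatic.
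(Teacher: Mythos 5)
Your Step 1 and the finite-box part of Step 2 are essentially the paper's own argument (the paper organizes the induction slightly differently — first the case where all points of $\Sigma$ share the same first coordinate, cutting as in Figure 2, then the general case grouping by first coordinate and cutting as in Figure 3 — but this is the same mechanism: the exact sequences of Observation \ref{O35}, the vanishing/injectivity of Observation \ref{O36}, and the commutative diagram of Observation \ref{O37}; also note that no extra ``compatibility of splittings with all boxes'' needs to be arranged, since each $i^B_r(a,b)$ is by definition the one fixed splitting $s_r(a,b)$ followed by inclusion and the projection $\pi^B_{ab,r}$, so the diagram in Observation \ref{O37} commutes automatically). The dimension count for item 2 is also the paper's, and your exhaustion argument for injectivity of $I_r(\Sigma)$ is fine.

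However, your argument for item 3 has a genuine gap. The final ``surjection between spaces of equal finite dimension'' is not available: the support of $\delta^{\tilde f}_r$ in $\mathbb R^2$ is invariant under the diagonal translation by $2\pi$ (since $t_r$ carries $\mathbb F_r(a,b)$ to $\mathbb F_r(a+2\pi,b+2\pi)$), so if it is nonempty the total sum $\sum_{(a,b)\in\supp(\delta^{\tilde f}_r)}\delta^{\tilde f}_r(a,b)$ is infinite, and $H_r(\tilde X)/TH_r(\tilde X)$ is an infinite-dimensional $\kappa$-vector space (a free $\kappa[t^{-1},t]$-module of rank $b^N_r$); $b^N_r$ is a $\kappa[t^{-1},t]$-rank, not a $\kappa$-dimension, and Proposition \ref{P36} item 3 applied to ``a large box'' only computes $\dim\mathbb F_r(B)$ for that box, never $b^N_r$. (The identity ``number of support points per fundamental domain $=b^N_r$'' only appears later, in Corollary \ref{C38}, after passing to $\mathbb T$ with compatible splittings — it cannot be used here.) Your claim that the image of $I_r$ equals $\sum_{(a,b)}\mathbb F_r(a,b)$ is also unjustified: the image is only contained in that sum, being spanned by the chosen splittings. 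The paper closes item 3 differently, by a direct-limit argument that your exhaustion idea from Step 2 could have been adapted to: with $B_k=(-\infty,a+k]\times[b-k,\infty)$ one has $\varinjlim_k\mathbb F_r(a+k,b-k)=H_r(\tilde X)$ and $\varinjlim_k\mathbb F_r(B_k)=H_r(\tilde X)/(\mathbb I_{-\infty}(r)+\mathbb I^{\infty}(r))=H_r(\tilde X)/TH_r(\tilde X)$ (Observation \ref{O31} item 3 and Proposition \ref{P32} item 2), and $\pi_r\cdot I_r=\varinjlim_k I^{B_k}_r$, each $I^{B_k}_r$ being an isomorphism by item 2 applied to the infinite box $B_k$; surjectivity (and injectivity) then pass to the limit. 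You should replace your dimension count by this limit argument.
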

\begin{proof}\

Item 1:  If cardinality of $\Sigma$ is one then the statement is verified by Observation \ref{O36}.  If all  points  of $\Sigma$ have the same first component the statement follows by induction on the cardinality of $\Sigma$  using Observation \ref{O37}. Precisely one decomposes $B$ as  in Figure 2,  $B=B_1\sqcup B_2$ with $B_1$ containing $k-1$ elements and $B_2$ containing one element. Here $k$ is the cardinality of 
$\Sigma.$ The statement, being true for $\Sigma \cap B_2$ and by induction hypothesis for  $\Sigma \cap B_1,$ in view of Observation \ref{O37}, holds true for $\Sigma.$

In general one decomposes the set $\Sigma$ as $\Sigma= \Sigma_1\sqcup \Sigma_2\cdots \sqcup \Sigma_k$ with the properties that all elements of 
$\Sigma _i$ have the same first component, say $a_i,$ with $a_1 < a_2 \cdots  < a_k.$ One proceeds by induction on $k$ by decomposing $B$ as  in Figure 3,  $B= B_1\sqcup B_2$with $B_2$ containing the set $\Sigma _1$ and $B_1$ the remaining elements.  By the previous step the statement is true for $\Sigma _1$ and by induction hypothesis for $\Sigma \cup B_1.$  In view of Observation \ref{O37} the statement holds for $\Sigma.$ 

Item 2.: Injectivity is true  by item 1..  The surjectivity 
 follows from equality of the dimension of the source  and of the target which follows  from Proposition \ref{P36} .

 Item 3.:  In view of Observation \ref{O31} item 3. one has: $$\varinjlim _{k\to \infty}  \mathbb F_r(a+k,b-k) = H_r(\tilde X)$$ 
and $$\varinjlim_{k\to \infty} \mathbb F_r((-\infty,a+k]\times [b-k, \infty)):= H_r(\tilde X)/ ( \mathbb I_{-\infty} +\mathbb I_\infty)= H_r(\tilde X) / TH_r(\tilde X) .$$
 For a fix $(a,b)$  denote by $B_k:=  (-\infty,a+k]\times [b-k, \infty)$  and regard $\mathbb R^2:= \cup _k B_k.$
The statement follows  by observing that $\pi_r\cdot I_r= \varinjlim _{\{k\to \infty\}}  I^{B_k}_r.$
\end{proof}
\vskip .2in 

{\it The configurations $\delta^f_r$ and $\hat \delta^f_r$}.  Let $\omega: \mathbb Z\times \mathbb R^2 \to \mathbb R^2$ be defined by $\omega (n,(a,b))= (a+2\pi k, b+2\pi k)$ and consider the quotient space $\mathbb T$ which can be identified  to $\mathbb C^\ast := \mathbb C\setminus 0$  by the map 
$$\mathbb T\ni \langle a, b\rangle \to z= e^{(b-a)+ ia}\in \mathbb C^\ast\setminus 0$$ with 
$\langle a, b\rangle$ denoting the class of $(a,b).$  In view of the equality $\delta^{\tilde f}_r(a,b)=\delta^{\tilde f}_r(a+2\pi k, b+2\pi k)$ one defines 

\begin{equation} \label {E4}
\delta^f_r(\langle a, b\rangle)= \delta^f_r(z):= \delta^{\tilde f}_r(a,b).
\end{equation}  
\vskip .1in

Consider  the commutative diagram with the multiplication by $t,$ (= $t\cdot$), inducing the linear isomorphism $\hat t _r(a,b)$
$$\xymatrix{
H_r(\tilde X)\ar[d]^{t\cdot= t_r} &\mathbb F_r(a,b)\ar[l]_{\supseteq}\ar[r]^{\pi_r(a,b)}\ar[d]^{t_r} &\hat\delta^{\tilde f}_r(a,b)\ar[d]^{\hat t_r(a,b)}\\
H_r(\tilde X) &\mathbb F_r(a+2\pi,b+2\pi)\ar[l]\ar[r] &\hat\delta^{\tilde f}_r(a+2\pi,b+2\pi)}$$
In view of  the equality  $t_r  ( \mathbb F_r(a,b) )=  \mathbb F_r(a+2\pi, b+2\pi)$  the subspaces 
\begin{equation}\label {E5}
\begin{aligned}
(\mathbb F_r) ' (\langle a, b\rangle):&= \sum_k \mathbb F'_r(a+2\pi k, b+2\pi k) \subseteq H_r(\tilde X)\\
\mathbb F_r (\langle a, b\rangle):&= \sum_k \mathbb F_r(a+2\pi k, b+2\pi k) \subseteq H_r(\tilde X)\\
\end{aligned}
\end{equation}
are 
 $\kappa[t^{-1}, t]$-submodules $H_r(\tilde X)$ . 
Denote by 
\begin{equation}\label {E6}
\hat\delta^f_r(\langle a, b\rangle ):= \bigoplus_k \hat\delta^{\tilde f}_r(a+2\pi k, b+2\pi k).
\end{equation} 
This vector space  equipped with the 
isomorphism $$\hat t_r= \hat t_r(\langle a, b\rangle): =\bigoplus_k    (\hat t_r(a+2\pi k,b+2\pi k): \hat\delta^f_r(\langle a, b\rangle )  \to \hat\delta^f_r(\langle a, b\rangle )$$ becomes  a free $\kappa[t^{-1}, t]$-module of rank $\delta^f_r(\langle a, b\rangle ).$

\vskip .1in

The  collection of  splittings  $\{s^{a,b}_r: \hat\delta(a,b)\to \mathbb F_r(a,b)\}$  which satisfy  
$t_r \cdot s^{a,b}_r= s^{a+2\pi, b+2\pi}_r\cdot \hat t_r(a,b)$  is called {\it compatible  splittings}. 
Clearly such collections exist. Indeed, it suffices to chose {\it splittings} only for  $\{(a,b)\in \supp (\delta^{\tilde f}_r), 0\leq a <2\pi\},$  observe that any $(a',b')\in \supp (\delta^{\tilde f}_r)$  is of the form $a'= a+2\pi k, b'= b+2\pi k$  for some integer $k\in \mathbb Z$ with $0\leq a<2\pi$   and take   $s_r(a',b'):= (\hat t_r)^k \cdot s_r(a,b) (\hat t_r)^{-k}.$  
Choose a collection of compatible splittings.

If one denotes by $$\supp (\delta^f_r)= \supp (\hat{\delta}^f_r):=\{\langle a, b\rangle \in \mathbb T \mid \delta^f_r(\langle a, b\rangle )\ne 0\},$$  
equivalently $$\{ z\in \mathbb C\setminus 0 \mid z= e^{ia + (b-a)}, \langle a, b\rangle\in \supp (\delta^f_r)\},$$
then the 
$\kappa$-linear map $I_r,$ constructed using {\it compatible splittings},  
$$I_r: \bigoplus _{\langle a, b\rangle\in \supp (\delta^f_r)}  \hat \delta^f_r(\langle a, b\rangle)\to H_r(\tilde X)$$  is  $\kappa[t^{-1},t]$-linear and  Proposition \ref{P37} implies 

\begin{corollary}\label{C38}\
\begin{enumerate}
\item  The composition of $\pi_r \cdot I_r$   with $\pi_r: H_r(\tilde X)\to H_r^N(X,\xi)$ the canonical projection is an isomorphism of free $\kappa[t^{-1},t]$--modules.
  \item The restriction of $I_r$ to $\oplus_{k\in \mathbb Z} \hat \delta^{\tilde f}_r(a+2\pi k, b+2\pi k)= \hat\delta^f_r(\langle a, b\rangle)$ 
 denoted by $I_r(\langle a, b\rangle)$ has the image contained in $\mathbb F_r(\langle a, b\rangle)$ and  $$\img I_r(\langle a, b\rangle)) \cap (\mathbb F_r)'(\langle a, b\rangle)= 0.$$ This implies that composed with the restriction of $\pi_r$ to $\mathbb F_r(\langle a, b\rangle)$ is injective  since $\pi_r\cdot I_r$ is an isomorphism. Moreover $\pi_r\cdot I_r (\langle a, b\rangle)$  is a split injective $\kappa[t^{-1},t]$-linear map whose image identifies to the free module $\mathbb F_r (\langle a, b\rangle)/ (\mathbb F_r '(\langle a, b\rangle).$ 
\item The collection of integers $\delta^f_r (\langle a, b\rangle )$ and the collection of free $\kappa[t^{-1}, t]$-modules $\hat \delta^f_r(\langle a, b\rangle)$ provide the configuration $\delta^f_r$ of points in $\mathbb T= \mathbb C\setminus 0$  and the configuration  $\hat \delta^f_r$ of  f.g. free $\mathbb C[t^{-1},t]-$modules, actually quotients of  split submodules of $H^N_r(X, \xi^f; \kappa[t^{-1},t]),$  as stated in introduction.  \end{enumerate}  
\end{corollary}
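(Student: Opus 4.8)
The plan is to deduce the Corollary from Proposition~\ref{P37}, the only extra ingredient being the bookkeeping of the $\kappa[t^{-1},t]$--module structure carried by all the objects in sight. For part~1 I would first note that, because the chosen splittings are compatible, the identity $t_r\circ s^{a,b}_r=s^{a+2\pi,b+2\pi}_r\circ\hat t_r(a,b)$ says precisely that $I_r$ intertwines multiplication by $t$ on the source $\bigoplus_{\langle a,b\rangle}\hat\delta^f_r(\langle a,b\rangle)$ (where $t$ acts summand--wise by $\hat t_r(\langle a,b\rangle)$) with $t_r$ on $H_r(\tilde X)$; as $\pi_r\colon H_r(\tilde X)\to H^N_r(X,\xi)=H_r(\tilde X)/T(H_r(\tilde X))$ is likewise $\kappa[t^{-1},t]$--linear, so is $\pi_r\circ I_r$. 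By Proposition~\ref{P37}(3) this composite is a $\kappa$--linear bijection, hence an isomorphism of $\kappa[t^{-1},t]$--modules. Since the target is finitely generated free, so is the source, and as each summand $\hat\delta^f_r(\langle a,b\rangle)$ is free of rank $\delta^f_r(\langle a,b\rangle)\ge1$ by the construction around equation~(\ref{E6}), this forces $\supp(\delta^f_r)$ to be finite and $\sum_{\langle a,b\rangle}\delta^f_r(\langle a,b\rangle)=b^N_r(X,\xi;\kappa)$, which is part~1.

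For parts~2 and 3 the central step is to promote Proposition~\ref{P37} to a block description of the submodules involved. Proposition~\ref{P37}(1) applied with $\Sigma=\supp(\delta^{\tilde f}_r)$ (the infinite case reducing to the finite one, since elements of the direct sum have finite support) shows $I_r$ is injective; setting $\mathcal N:=\img I_r$, the map $I_r$ is an isomorphism onto $\mathcal N$, and since $\pi_r\circ I_r$ is an isomorphism (part~1) and $\ker\pi_r=T(H_r(\tilde X))$ we obtain the $\kappa[t^{-1},t]$--module splitting $H_r(\tilde X)=\mathcal N\oplus T(H_r(\tilde X))$ with $\mathcal N=\bigoplus_{\langle a',b'\rangle}N(\langle a',b'\rangle)$, $N(\langle a',b'\rangle):=\img I_r(\langle a',b'\rangle)=\bigoplus_k\img i_r(a'+2\pi k,b'+2\pi k)$. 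Next, for the infinite box $B=(-\infty,\alpha]\times[\beta,\infty)$ one has $\mathbb F'_r(B)=T(H_r(\tilde X))$ by Proposition~\ref{P32}, so the projection $\mathbb F_r(\alpha,\beta)\to\mathbb F_r(B)$ is the restriction of $\pi_r$ and $I^B_r$ is $\pi_r$ composed with the restriction of $I_r$; Proposition~\ref{P37}(2) then yields
\[ \mathbb F_r(\alpha,\beta)=T(H_r(\tilde X))\oplus\bigoplus_{a''\le\alpha,\ b''\ge\beta}\img i_r(a'',b''), \]
a sub--decomposition of the global splitting of $\mathcal N$ (the sum on the right being over $(a'',b'')\in\supp(\delta^{\tilde f}_r)$). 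The one genuinely delicate point --- and, I expect, the main obstacle --- is exactly this last compatibility assertion: one has to check, from the definitions of $i^B_r(a,b)$ and $i_r(a,b)$ and from Observation~\ref{O36}, that the isomorphisms produced by Proposition~\ref{P37}(2) for the various finite and infinite boxes are all restrictions of the one direct--sum decomposition of $\mathcal N$ cut out by the fixed compatible splittings. Everything afterwards is combinatorial.

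Granting that, part~2 runs as follows. The inclusion $\img I_r(\langle a,b\rangle)\subseteq\mathbb F_r(\langle a,b\rangle)$ is immediate, since $i_r(a+2\pi k,b+2\pi k)$ takes values in $\mathbb F_r(a+2\pi k,b+2\pi k)\subseteq\sum_j\mathbb F_r(a+2\pi j,b+2\pi j)=\mathbb F_r(\langle a,b\rangle)$. Using $\mathbb F'_r(a',b')=\mathbb F_r(a'-\epsilon,b')+\mathbb F_r(a',b'+\epsilon)\supseteq T(H_r(\tilde X))$ and the block description, both $(\mathbb F_r)'(\langle a,b\rangle)$ and $\mathbb F_r(\langle a,b\rangle)$ are coordinate subspaces of $\mathcal N\oplus T(H_r(\tilde X))$, each consisting of the $T(H_r(\tilde X))$ block together with the blocks $\img i_r(a'',b'')$ indexed by
\[ S=\bigcup_k\bigl(\{a''\le a+2\pi k-\epsilon,\ b''\ge b+2\pi k\}\cup\{a''\le a+2\pi k,\ b''\ge b+2\pi k+\epsilon\}\bigr), \]
respectively by $T=\bigcup_k\{a''\le a+2\pi k,\ b''\ge b+2\pi k\}$. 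A short check, using $\epsilon<\tilde\epsilon(f)$ and that distinct elements of $CR(\tilde f)$ differ by at least $\tilde\epsilon(f)$, shows $S\subseteq T$, that $S$ omits every point of the orbit $\{(a+2\pi k,b+2\pi k)\}_k$, and that $T\setminus S$ is exactly that orbit. Hence, inside $\mathbb F_r(\langle a,b\rangle)$, the submodule $(\mathbb F_r)'(\langle a,b\rangle)$ is a coordinate complement of $N(\langle a,b\rangle)=\img I_r(\langle a,b\rangle)$, so $\img I_r(\langle a,b\rangle)\cap(\mathbb F_r)'(\langle a,b\rangle)=0$ and $\mathbb F_r(\langle a,b\rangle)=\img I_r(\langle a,b\rangle)\oplus(\mathbb F_r)'(\langle a,b\rangle)$; in particular the composite of $I_r(\langle a,b\rangle)$ with the quotient map onto $\mathbb F_r(\langle a,b\rangle)/(\mathbb F_r)'(\langle a,b\rangle)$ is an isomorphism. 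Composing once more with $\pi_r$, which is injective on $\mathcal N$ and carries the summand $N(\langle a,b\rangle)$ onto a split $\kappa[t^{-1},t]$--submodule of $H^N_r$, one obtains the split injective $\kappa[t^{-1},t]$--linear map $\pi_r\circ I_r(\langle a,b\rangle)$ whose image is identified with the free module $\mathbb F_r(\langle a,b\rangle)/(\mathbb F_r)'(\langle a,b\rangle)$. That is part~2.

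Part~3 is then repackaging: the identification $\mathbb T\cong\mathbb C^\ast$ via $\langle a,b\rangle\mapsto e^{(b-a)+ia}$ was fixed before equation~(\ref{E4}); the support of $\delta^f_r$ on $\mathbb C^\ast$ is finite and the modules $\hat\delta^f_r(\langle a,b\rangle)$ are finitely generated free of rank $\delta^f_r(\langle a,b\rangle)$ by part~1; their direct sum is $H^N_r(X,\xi;\kappa[t^{-1},t])$ by part~1; and by part~2 each $\hat\delta^f_r(\langle a,b\rangle)$ is realized as $L_r(\langle a,b\rangle)/L'_r(\langle a,b\rangle)$ with $L'_r(\langle a,b\rangle)=\pi_r\bigl((\mathbb F_r)'(\langle a,b\rangle)\bigr)\subseteq L_r(\langle a,b\rangle)=\pi_r\bigl(\mathbb F_r(\langle a,b\rangle)\bigr)$, both coordinate subspaces of $H^N_r\cong\mathcal N$ and hence split submodules, so $(L'_r\subset L_r)\in\tilde{\mathcal S}(H^N_r)$. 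The only places calling for genuine care are the block--compatibility step flagged above and the elementary but slightly fussy verification of the relations $S\subseteq T$ and $T\setminus S=\{(a+2\pi k,b+2\pi k)\}_k$ from the bound $\epsilon<\tilde\epsilon(f)$.
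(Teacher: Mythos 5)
Your argument is correct, and in outline it is the deduction the paper intends: choose compatible splittings, observe that this makes $I_r$ a $\kappa[t^{-1},t]$--linear map, and then invoke Proposition \ref{P37}. The paper itself records nothing beyond this (item 1 is exactly your part 1, and for items 2--3 it simply asserts that Proposition \ref{P37} implies them), so the substance you add is the global block decomposition: applying Proposition \ref{P37}(2) to the infinite boxes $(-\infty,\alpha]\times[\beta,\infty)$, where $\mathbb F_r'(B)=T(H_r(\tilde X))$ by Proposition \ref{P32}, to get $\mathbb F_r(\alpha,\beta)=T(H_r(\tilde X))\oplus\bigoplus_{a''\le\alpha,\,b''\ge\beta}\img\, i_r(a'',b'')$, and then reading off $\mathbb F_r(\langle a,b\rangle)$, $(\mathbb F_r)'(\langle a,b\rangle)$ and $\img I_r(\langle a,b\rangle)$ as coordinate subspaces whose index sets differ exactly by the orbit $\{(a+2\pi k,b+2\pi k)\}_k$ (this is where $0<\epsilon<\tilde\epsilon(f)$ and the spacing of $CR(\tilde f)$ from Observation \ref{O32} enter, and the computation is as you indicate). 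This is a legitimate and complete route to the assertion $\img I_r(\langle a,b\rangle)\cap(\mathbb F_r)'(\langle a,b\rangle)=0$ and to the splitting statement, arguably more explicit than what the paper offers. One remark: the ``delicate compatibility'' you flag is in fact automatic, since all the maps $i_r(a,b)$ and $i^B_r(a,b)$ are built from one fixed family of compatible splittings, so the blocks $\img\, i_r(a'',b'')=s_r(a'',b'')(\hat\delta^{\tilde f}_r(a'',b''))$ are literally the same subspaces of $H_r(\tilde X)$ for every box, and the various isomorphisms of Proposition \ref{P37}(2) are by construction restrictions of the single decomposition $H_r(\tilde X)=T(H_r(\tilde X))\oplus\bigoplus\img\, i_r(a'',b'')$; no separate verification is needed. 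Your reading of $L(z),L'(z)$ as the images under $\pi_r$ (rather than the subspaces of $H_r(\tilde X)$ themselves) is also the cleaner formulation of item 3 and is consistent with the theorem's statement.
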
 

\section {Proof of Theorem \ref{T1}.}

 Corollary \ref {C38}, Proposition \ref{P32} together with the formulae (\ref{E4}), (\ref{E5}) and (\ref{E6}) imply  items 1. and 2.
 
 Indeed Proposition \ref{P32} implies $H_r^N(X,\xi^f; \kappa[t^{-1},t])= H_r(\tilde X)/ (\mathbb I_{-\infty}(r) + \mathbb I^\infty(r))=H_r(\tilde X)/ T(H_r(\tilde X)).$
 The configuration $\delta^f_r$ is defined by (\ref{E4}) and the polynomial $P^f_r(z)$ by  $$P_r^f(z)= \prod_{z_i\in \supp (\delta^f_r)} (z-z_i)^{\delta^f_r(z_i)}.$$  
 
For $z=e^{ia +(b-a)}$ one takes $L(z)= \mathbb F_r (\langle a, b\rangle) $ and  $L'(z)= \mathbb F'_r (\langle a, b\rangle).$  
Corollary \ref {C38} items 2. and 3. imply that  the configuration $\hat \delta^f_r$ defined by equality (\ref{E6}) satisfies also $\hat \delta^f_r(z)= L(z)/ L'(z)$ as $\kappa[t^{-1}, t]$-modules and item 1. and 
the equality  (\ref{E4})  imply that the properties (a) and (b) in Theorem \ref{T1} item 2. are satisfied.

\vskip .1in 

Suppose $\kappa= \mathbb C.$ By choosing a $\mathbb C^[t^{-1},t]$--valued inner product on $H^N(X,\xi^f; \mathbb C[t^{-1}, t])$ and by using the von Neumann completion  described in subsection \ref{SS21}  one obtains from $\hat\delta^f_r$  the configuration $\hat{\hat \delta}^f_r(r)$ of closed Hilbert submodules of the $\mathcal N= L^\infty(\mathbb S^1)$-module $\overline{H^N_r(X,\xi: \mathbb C[t^{-1}, t])}$  as stated in Introduction.
This establishes Item 3.
 
The verifications of {\bf P1} and {\bf P2} will be discussed in \cite{Bu2}. They are first verified  for finite simplicial complexes and simplicial map then for Hilbert cube manifolds and  tame maps and then  concluded  for arbitrary continuous maps defined on arbitrary compact ANRs. This is why we have summarized in  subsection (\ref{SS23}) facts about Hilbert cube manifolds.


\end{document}